\numberwithin{equation}{section}
\definecolor{grey}{rgb}{.7,.7,.7}
\definecolor{refkey}{gray}{.45}
\definecolor{labelkey}{gray}{.45}
\newtheorem{theorem}{Theorem}
\newtheorem{prop}[theorem]{Proposition}
\newtheorem{lemma}[theorem]{Lemma}
\theoremstyle{remark}
\newtheorem{remark}[theorem]{Remark}
\theoremstyle{definition}
\newtheorem{defin}[theorem]{Definition}
\def\eps{\varepsilon}
\def\L{\mathscr{L}}
\def\E{\mathcal E}
\def\P{\mathcal P}
\def\R{\mathbb R}
\def\RR{\mathcal R}
\def\N{\mathbb N}
\def\Z{\mathbb Z}
\def\bal{\begin{aligned}}
\def\eal{\end{aligned}}
\def\proofof#1{\begin{proof}[Proof of #1]}
\def\XXint#1#2#3{{\setbox0=\hbox{$#1{#2#3}{\int}$} \vcenter{\vspace{-1pt}\hbox{$#2#3$}}\kern-.5\wd0}}
\def\Xint#1{\mathchoice {\XXint\displaystyle\textstyle{#1}}{\XXint\textstyle\scriptstyle{#1}}{\XXint\scriptstyle\scriptscriptstyle{#1}}{\XXint\scriptscriptstyle\scriptscriptstyle{#1}}\!\int}
\def\XXiint#1#2#3{{\setbox0=\hbox{$#1{#2#3}{\iint}$} \vcenter{\vspace{-1pt}\hbox{$#2#3$}}\kern-0.5\wd0}}
\def\intmed{\Xint{\hbox{---}}}
\def\comp{\subset\subset}
\def\spt{{\rm spt}}
\newcommand{\norm}[1]{\left\lVert#1\right\rVert}
\newcommand{\weakstar}{\overset{\ast}{\rightharpoonup}}
\DeclareMathOperator{\diam}{diam}
\newcommand{\res}{\mathop{\hbox{\vrule height 7pt width .5pt depth 0pt
\vrule height .5pt width 6pt depth 0pt}}\nolimits}
\DeclareMathOperator{\convexEnv}{co}
\newcommand{\defeq}{\coloneqq}
\newcommand{\dd}{\,\mathrm{d}}
\title[Particle approximation]{Particle approximation of nonlocal interaction energies}
\author{Davide Carazzato}
\address{Davide Carazzato\\Faculty of Mathematics, University of Vienna, Austria}
\email{davide.carazzato@univie.ac.at}
\author{Aldo Pratelli}
\address{Aldo Pratelli\\Department of Mathematics, University of Pisa, Italy}
\email{aldo.pratelli@unipi.it}
\author{Ihsan Topaloglu}
\address{Ihsan Topaloglu\\Department of Mathematics and Applied Mathematics, Virginia Commonwealth University, Richmond VA, United States}
\email{iatopaloglu@vcu.edu}
\date{\today}
\thanks{This is a post-peer-review, pre-copyedit version of an article published in Nonlinear Analysis. The final
authenticated version is available online at: \url{https://doi.org/10.1016/j.na.2025.113974}.}
\begin{document}

\begin{abstract}
    We consider Riesz-type nonlocal energies with general interaction kernels and their discretizations related to particle systems. We prove that the discretized energies $\Gamma$-converge in the weak-$*$ topology to the Riesz functional defined over the space of probability measures. We also address the minimization problem for the discretized energies, and prove the existence of minimal configurations of particles in a very general and natural setting.
\end{abstract}

\maketitle

\section{Introduction}\label{sec:intro}

In this note we consider $n$-particle interaction energies of the form
\begin{equation} \label{eq:discrete_energy}
\E_n(x_1,\ldots,x_n) = \frac{1}{n^2} \sum_{1\leq i\neq j\leq n} g(x_i-x_j)\,,
\end{equation}
where $g\colon\R^N \to \R$ is a pairwise interaction kernel. These energies are directly related to, and can be considered as, discrete versions of continuous interaction energies 	
\begin{equation} \label{eq:continuum_energy}
\E(\mu) = \int_{\R^N}\int_{\R^N} g(x-y) \dd \mu(x)\dd \mu(y)
\end{equation}
defined over probability measures $\mu\in\P(\R^N)$. In their discrete or continuum form, such pairwise interaction energies appear in many biological or physical applications, ranging from swarming models to models of molecular structure (see e.g.~\cite{BaiCarGom-Cas2024pp,CR-L2024,CarCraYao2019,F2023,Ser2024pp}, and references therein).

Some natural questions, raised by Ca\~{n}izo and Ramos-Lora in~\cite{CR-L2024} (see Section 4), are whether there exists a nontrivial measure $\mu\in\P(\R^N)$ and a sequence of minimizers $(x_1^n,\ldots,x_n^n)$ of $\E_n$ such that $\pi_n\defeq\frac 1n\,\sum_{i=1}^n \delta_{x_i^n} \weakstar \mu$ as $n\to \infty$; whether the convergence is true for all sequences of minimizers (up to rigid motions); and whether, in this case, $\mu$ is a minimizer of the continuum energy $\E$. As the authors point out, these questions are also important to justify the use of discrete models in numerical computations investigating steady states of the continuum energies. Some positive answers to these questions appear in~\cite{CarChiHua2014} and~\cite{CanPat2018}. Carrillo, Chipot, and Huang~\cite{CarChiHua2014} consider attractive-repulsive interaction kernels in the power-law form
\[
g(x) = \frac{|x|^{\beta}}{\beta} - \frac{|x|^{\alpha}}{\alpha}
\]
with $\beta>\alpha\geq 1$, whereas Ca\~{n}izo and Patacchini~\cite{CanPat2018} consider general kernels which require some regularity conditions at the origin, and in particular include power-law kernels satisfying $\alpha>2-N$. In~\cite{CR-L2024} it is conjectured that the answer to this question should also be positive when $-N<\alpha\leq 2-N$. In~\cite[Proposition~2.8]{Ser2015} and in~\cite[Proposition~3.5]{Ser2024pp}, the author uses some robust techniques to address these questions in the presence of a confining external potential. Our result, contained in Theorem~\ref{thm:gamma-convergence}, is analogous to those ones; however, the techniques we use are different, and we do not necessarily require that the interaction kernels are attractive in long distances. We also mention that these questions exhibit connections with the theory of dislocations in materials science, and some discrete-to-continuum $\Gamma$-limit results in the literature have similar flavor compared to our setting (see for example~\cite[Theorem~1.1]{MPS2017}).

Independent to the connection of convergence of minimizers, it is also an interesting endeavor to study the approximation of $\E_n(\pi_n)$ and to obtain an asymptotic expansion in terms of $n$. Here, with a slight abuse of notation, given the atomic measure $\pi_n=\frac{1}{n}\sum_{i=1}^n \delta_{x_i}$, we write $\E_n(\pi_n)$ to denote the expression in~\eqref{eq:discrete_energy}. Since there will be no confusion, it is convenient to extend $\E_n$ to the whole $\P(\R^N)$:
\[
\E_n(\mu) \coloneqq \begin{cases}
\E_n(x_1,\ldots,x_n)\ &\text{if }\mu = \frac{1}{n}\sum_{i=1}^n \delta_{x_i}, \text{ with }x_1,\ldots,x_n\in\R^N\,,\\
+\infty &\text{otherwise}\,.
\end{cases}
\]
When $g$ is given by a power-law interaction with $-N<\alpha<0$ and $\beta=2$, and when continuum minimizers are sufficiently regular, Petrache and Serfaty~\cite{PetSer2017} obtain a second-order asymptotic expansion of $\E_n(\pi_n)$ where the first-order term is $\E(\mu)$. Likewise, in~\cite{CanPat2018} Ca\~{n}izo and Patacchini obtain the $\Gamma$-convergence of discrete energies $\E_n$ to the continuum energy $\E$ in the narrow topology, again for general kernels that are not more singular than $|x|^{2-N}$ near the origin.

In this note we consider rather general interaction kernels that merely satisfy some of the following assumptions:
\begin{enumerate}[label = (H\arabic*)]\addtolength{\itemsep}{6pt}
\item\label{ass:bddbelow_lsc} $g$ is bounded from below, lower semicontinuous and is in $L^1_{\rm loc}(\R^N)$;
\item\label{ass:zerolimit} $\displaystyle \liminf_{|x|\to \infty} g(x) \geq 0$;
\item\label{ass:new} $g$ is continuous in $\R^N \setminus \{0\}$, and \emph{radial and decreasing} in a neighborhood of the origin. That is, there exists $\bar r>0$ such that whenever $0<|x|\leq |y|<\bar r$, one has $g(x)\geq g(y)$;
\item\label{ass:negativemeas} there exists a measure $\mu_0 \in \P(\R^N)$ such that $\E(\mu_0)<0$.
\end{enumerate}
For simplicity, we will always assume $g$ to be centrally symmetric, i.e. $g(x)=g(-x)$. Notice that this can always be assumed for free because the energy is unchanged if we replace $g(x)$ with $\frac{g(x)+g(-x)}{2}$. Our results are the following:

\begin{itemize}
\item If $g$ satisfies~\ref{ass:bddbelow_lsc}, then $\E_n\xrightarrow{\Gamma} \E$ in the space $\P(\R^N)$ endowed with the weak-$*$ topology (Theorem~\ref{thm:gamma-convergence}).
\item If $g$ satisfies~\ref{ass:bddbelow_lsc}, \ref{ass:zerolimit} and~\ref{ass:negativemeas}, then $\inf \E_n \to \inf \E$ as $n\to \infty$, and any limit point of a sequence $\pi_n$, for which $\lim \E_n(\pi_n)$ achieves $\liminf$ of the infimum of $\E_n$, is a minimizer of $\E$ (Proposition~\ref{prop:limit-particles}).
\item  If $g$ satisfies~\ref{ass:bddbelow_lsc}--\ref{ass:negativemeas}, then $\E_n$ admits a minimizer in the class of $n$-discrete probability measures for any $n$ sufficiently large (Theorem~\ref{thm:existence}).
\end{itemize}

We point out that the condition~\ref{ass:negativemeas} is very similar to condition~(HE) in~\cite{SST2015}, where the requirement was $\E(\mu_0)\leq 0$ instead of $\E(\mu_0)<0$. The role of this condition is crucial; indeed, in~\cite[Theorem 3.2]{SST2015} it is proved that if $g$ satisfies~\ref{ass:bddbelow_lsc} and~\ref{ass:zerolimit} then there exists a minimizer for $\E$ if and only if~(HE) holds. And of course, if minimizers for the measure problem do not exist, then the discretization of $\E$ is pointless when one considers only the ground states of $\E$.\par

Perhaps, the main novelty of our work regards the existence of the discrete minimizers, established in Theorem~\ref{thm:existence}. The so-called Morse potentials $g(x)=C_1e^{-|x|/l_1}-C_2e^{-|x|/l_2}$ are an example of kernels for which this matter is non-trivial since they are radially decreasing at large distance (see~\cite[Proposition~3.2]{CCP2015} for more details). However, we discuss in Remark~\ref{rem:trivial-existence} some situations where it is much easier to establish the existence of discrete minimizers.

The plan of the paper is as follows: In Section~\ref{sec:prelim} we state three results that will be used in the rest of the note. In Section~\ref{sec:convergence} we prove the $\Gamma$-convergence of energies, and the convergence of minimizers. Section~\ref{sec:existence} is dedicated to the existence of discrete minimizers.

\section{Preliminaries}\label{sec:prelim}

We start by collecting some definitions and notation that we will use in this note. For every natural number $n\geq1$ we define the \emph{space of $n$-discrete probability measures} as the collection of atomic probability measures that can be written as $\frac{1}{n}\sum_{i=1}^n \delta_{x_i}$, where $x_1,\ldots,x_n\in\R^N$. This space is a subset of a $nN$-dimensional vector space in the space of measures $\mathcal{M}(\R^N)$; hence, the weak-$*$ topology induced by the inclusion in $\P(\R^N)$ is equivalent to the standard topology in $(\R^N)^n$. Since the interaction kernel might have a singularity at zero (such as the Coulomb kernel), for the $n$-discrete probability measure $\pi_n=\frac{1}{n}\sum_{i=1}^n \delta_{x_i}$ it is better to consider the discrete counterpart $\E_n(\pi_n)=\E_n(x_1,\ldots,x_n)$ instead of $\E(\pi_n)$.

We define the interaction of two measures by $\E(\mu,\nu)=\iint g(x-y)\dd \mu(x)\dd \nu(y)$, and for simplicity $\E(\mu)=\E(\mu,\mu)$. By $\|\mu\|$ we denote the total variation of a measure $\mu$, and by $\convexEnv(\cdot)$ the convex hull of a set. The constant $C$ in estimates may increase from line to line.

Let us first state the Euler-Lagrange conditions for the minimizers of $\E$. The proof of this lemma can be found in~\cite{BCT2018,CDM2016,CP2025}.

\begin{lemma}[Euler-Lagrange Conditions]\label{lemma:EL-E}
Suppose that $g$ satisfies~\emph{\ref{ass:bddbelow_lsc}}. If $\mu\in\P(\R^N)$ is a minimizer of $\E$, then
\[
\begin{cases}
\psi_\mu=\E(\mu) & \mu\text{-a.e.}\, ,\\
\psi_\mu\geq \E(\mu) & \L^N\text{-a.e. in }\R^N\setminus \spt(\mu)\, ,
\end{cases}
\]
where $\psi_\mu(x) = \int g(y-x)\dd \mu(y)$. Additionally, $\psi_\mu\leq \E(\mu)$ in $\spt (\mu)$ since $\psi_\mu$ is lower semicontinuous.
\end{lemma}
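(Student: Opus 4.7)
The plan is to exploit the minimality of $\mu$ through one-parameter families of competitors $\mu_t \defeq (1-t)\mu + t\nu$ for $\nu \in \P(\R^N)$, derive the variational inequality $\E(\mu,\nu) \ge \E(\mu)$, and then specialize $\nu$ cleverly to extract pointwise information about $\psi_\mu$. Assuming $\E(\mu)<+\infty$ (otherwise the conclusion is vacuous) and taking any $\nu$ with $\E(\nu)<+\infty$, the bilinearity of $\E$ gives
\begin{equation*}
\E(\mu_t) = (1-t)^2\E(\mu) + 2t(1-t)\E(\mu,\nu) + t^2\E(\nu).
\end{equation*}
Since $g$ is bounded below by \emph{\ref{ass:bddbelow_lsc}}, all integrals are well-defined in $(-\infty,+\infty]$; dividing $\E(\mu_t)-\E(\mu)\ge 0$ by $t$ and sending $t\to 0^+$ yields $\E(\mu,\nu)\ge \E(\mu)$, equivalently, $\int \psi_\mu\dd\nu\ge \E(\mu)$ by Fubini.

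Specializing $\nu$ then produces both pointwise conditions. For the $\mu$-a.e.\ statement, given any Borel set $A$ with $\mu(A)>0$, plug in $\nu=\mu(A)^{-1}\restr{\mu}{A}$, which has finite self-energy since $g$ is bounded below and $\E(\mu)<+\infty$. The variational inequality becomes $\mu(A)^{-1}\int_A \psi_\mu\dd\mu \ge \E(\mu)$, and the arbitrariness of $A$ gives $\psi_\mu\ge \E(\mu)$ $\mu$-a.e. On the other hand, Fubini yields $\int\psi_\mu\dd\mu=\E(\mu)$, so the inequality must be an equality $\mu$-a.e., proving the first condition. For the $\L^N$-a.e.\ statement, for each $x\in\R^N$ and $r>0$ take $\nu_{x,r}\defeq |B_r(x)|^{-1}\restr{\L^N}{B_r(x)}$; this is a probability measure with finite self-energy because $g\in L^1_\loc(\R^N)$, so the variational inequality reads $\intmed_{B_r(x)}\psi_\mu\dd\L^N \ge \E(\mu)$. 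A further application of Fubini, again using $g\in L^1_\loc(\R^N)$, shows $\psi_\mu\in L^1_\loc(\R^N)$, so the Lebesgue differentiation theorem delivers $\psi_\mu(x)\ge \E(\mu)$ for $\L^N$-a.e.\ $x$, in particular on $\R^N\setminus\spt(\mu)$.

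Finally, the bound $\psi_\mu\le \E(\mu)$ on $\spt(\mu)$ follows from the lower semicontinuity of $\psi_\mu$ (inherited from that of $g$ via Fatou's lemma): the set $\{\psi_\mu = \E(\mu)\}$ has full $\mu$-measure, hence is dense in $\spt(\mu)$, and the lower semicontinuity propagates the upper bound from this dense set to every point of $\spt(\mu)$. The only delicate issue in the whole argument is the careful handling of possibly singular kernels; this is precisely why the lower bound on $g$ and its $L^1_\loc$ regularity both appear in \emph{\ref{ass:bddbelow_lsc}}, and why Fubini and Lebesgue differentiation go through without modification. No substantial obstacle arises beyond these bookkeeping considerations.
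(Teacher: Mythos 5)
The paper gives no proof of its own for this lemma and instead cites references; your route via the interpolations $\mu_t=(1-t)\mu+t\nu$, the first-variation inequality $\E(\mu,\nu)\geq\E(\mu)$ for admissible $\nu$, and then judicious choices of $\nu$, is the standard one, and the $\mu$-a.e.\ step and the lower-semicontinuity argument at the end are both correct (you also correctly verify the finiteness of $\E(\nu)$ for your test measures before dividing by $t$).

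One step, however, is not justified: the assertion that $\psi_\mu\in L^1_\loc(\R^N)$. Writing $\int_K\psi_\mu\,dx=\int G_K(y)\,d\mu(y)$ with $G_K(y)=\int_{y-K}g(z)\,dz$, the function $G_K$ is finite and continuous but has no uniform bound, and under \ref{ass:bddbelow_lsc} alone together with $\E(\mu)<+\infty$ (which controls $\psi_\mu$ only in $L^1(\mu)$, not in $L^1(\L^N)$) there is no control on $\int G_K\,d\mu$ when $\mu$ has non-compact support. Note that compactness of $\spt\mu$ is established in Lemma~\ref{lemma:compact-support-min} only under the additional hypotheses \ref{ass:zerolimit} and \ref{ass:negativemeas}, which the present lemma does not assume. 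Without local integrability the inequality $\intmed_{B_r(x)}\psi_\mu\geq\E(\mu)$ does not pass to a pointwise statement via Lebesgue differentiation. The fix is easy and bypasses differentiation altogether: if $E=\{\psi_\mu<\E(\mu)\}$ had $|E|>0$, pick a bounded $E'\subset E$ with $|E'|>0$ and test with $\nu=|E'|^{-1}\restr{\L^N}{E'}$, which has finite self-energy because $E'$ is bounded and $g\in L^1_\loc$. On $E'$ the potential is trapped between $\inf g$ and $\E(\mu)$, so it is bounded there and $\intmed_{E'}\psi_\mu<\E(\mu)$ strictly, contradicting $\int\psi_\mu\,d\nu\geq\E(\mu)$. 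With this substitution the argument is complete.
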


Next we establish the compactness of the support of any minimizer of $\E$. Although this result appears in the literature for interaction kernels that are increasing outside a large ball, here we prove it in a more general setting.

\begin{lemma}\label{lemma:compact-support-min}
Suppose that $g$ satisfies~\emph{\ref{ass:bddbelow_lsc}}, \emph{\ref{ass:zerolimit}} and~\emph{\ref{ass:negativemeas}}. Then, any mimizer $\mu\in\P(\R^N)$ of $\E$ has compact support.
\end{lemma}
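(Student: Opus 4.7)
The plan is to argue by contradiction using the Euler--Lagrange conditions of Lemma~\ref{lemma:EL-E}. Since $\mu$ is a minimizer, assumption~\ref{ass:negativemeas} yields $\E(\mu) \leq \E(\mu_0) < 0$, so Lemma~\ref{lemma:EL-E} gives $\psi_\mu(x) \leq \E(\mu) < 0$ for every $x \in \spt(\mu)$. The idea is to show that, on the contrary, $\liminf_{|x|\to \infty} \psi_\mu(x) \geq 0$, which forces $\spt(\mu)$ to be contained in some large ball.

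To obtain this lower bound on $\psi_\mu$ at infinity, I would fix $\epsilon>0$ and select the following three ingredients: by~\ref{ass:bddbelow_lsc}, a constant $A\geq 0$ with $g\geq -A$ on $\R^N$; by~\ref{ass:zerolimit}, a radius $R>0$ such that $g(z)\geq -\epsilon$ whenever $|z|\geq R$; and, since every Borel probability measure on $\R^N$ is tight, a radius $M>0$ with $\mu(\R^N\setminus \overline{B_M})<\epsilon$. For any $x$ with $|x|>M+R$, the ball $B_R(x)$ is contained in $\R^N\setminus \overline{B_M}$, so that $\mu(B_R(x))<\epsilon$, and splitting the integral defining $\psi_\mu(x)$ at the sphere of radius $R$ around $x$ gives
\[
\psi_\mu(x) = \int_{B_R(x)} g(y-x)\dd\mu(y) + \int_{\R^N\setminus B_R(x)} g(y-x)\dd\mu(y) \geq -A\,\mu(B_R(x)) - \epsilon \geq -(A+1)\epsilon.
\]

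Choosing $\epsilon$ small enough that $-(A+1)\epsilon > \E(\mu)$ — which is possible because $\E(\mu)<0$ — I conclude that no $x$ with $|x|>M+R$ can lie in $\spt(\mu)$, for otherwise the inequality $\psi_\mu(x)\leq \E(\mu)$ from Lemma~\ref{lemma:EL-E} would be violated. Thus $\spt(\mu)\subset \overline{B_{M+R}}$, and since the support is closed, it is compact. The subtle point of the argument is handling the near-field contribution to $\psi_\mu(x)$: the kernel $g$ may be singular at the origin, so that bounding $\int_{B_R(x)} g(y-x)\dd\mu(y)$ by a multiple of $\mu(B_R(x))$ is only possible because~\ref{ass:bddbelow_lsc} provides a global lower bound on $g$; the far-field piece is then absorbed using~\ref{ass:zerolimit}, and tightness of $\mu$ is what makes $\mu(B_R(x))$ small for $|x|$ large.
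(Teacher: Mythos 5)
Your proof is correct, and it takes a cleaner and more direct route than the paper. Both arguments rest on Lemma~\ref{lemma:EL-E} together with tightness of $\mu$ and assumptions~\ref{ass:bddbelow_lsc}, \ref{ass:zerolimit}, \ref{ass:negativemeas}, but they use different parts of the Euler--Lagrange conditions. The paper works with the $\mu$-a.e.\ equality $\psi_\mu = \E(\mu)$: after estimating $\psi_\mu(x)$ for far-away $x\in\spt\mu$, it concludes that the tail self-interaction $\int_{\R^N\setminus B_{R+R'}} g(y-x)\dd\mu(y)$ must be bounded above by $\E(\mu)/2$ for $\mu$-a.e.\ far $x$, and then it normalizes the restriction $\tilde\mu = \mu\res(\R^N\setminus B_{R+R'})$ to produce a competitor $\nu = \tilde\mu/\norm{\tilde\mu}$ with strictly smaller energy, which contradicts minimality. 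You instead use the pointwise bound $\psi_\mu\leq\E(\mu)$ on $\spt\mu$ (the consequence of lower semicontinuity recorded at the end of Lemma~\ref{lemma:EL-E}) and show directly that $\psi_\mu(x)\geq -(A+1)\eps > \E(\mu)$ whenever $|x|$ is large, splitting the potential at the sphere of radius $R$ around $x$: the near-field piece is controlled by the global lower bound $g\geq -A$ times the small mass $\mu(B_R(x))<\eps$ (by tightness), and the far-field piece is at least $-\eps$ by~\ref{ass:zerolimit}. This reaches the conclusion without constructing a competitor at all, and yields the explicit radius $M+R$ bounding $\spt\mu$. Both proofs are sound; yours is shorter and more elementary, while the paper's competitor argument is closer in spirit to the concentration-compactness manipulations used elsewhere in the note.
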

\begin{proof}
We argue by contradiction, and we suppose that $\spt \mu$ is not compact. We call $\inf g = -C\in(-\infty,0)$. Thanks to~\ref{ass:negativemeas} we know that $\E(\mu)<0$, and thus $\psi_\mu= \E(\mu)<0$ almost everywhere in $\spt \mu$. By assumption~\ref{ass:zerolimit}, there exists a radius $R>0$ such that $g \geq \E(\mu)/4$ in $\R^N\setminus B_R$. Since $\mu$ is a probability measure, there exists $R'>0$ such that
\[
    \mu(\R^N\setminus B_{R'}) < \min \bigg\{ \frac 13,\,  -\frac{\E(\mu)}{4C}\bigg\}\,.
\]
For any point $x\in \spt \mu$ with $|x|>R+R'$,
\[
\begin{split}
\psi_\mu(x) &= \int_{B_{R'}}g(y-x)\dd \mu(y) + \int_{B_{R+R'}\setminus B_{R'}}g(y-x)\dd \mu(y) + \int_{\R^N\setminus B_{R+R'}} g(y-x) \dd \mu(y)\\
&\geq \frac{\E(\mu)}{4}\mu(B_{R'})-C \mu(B_{R+R'}\setminus B_{R'})+\int_{\R^N\setminus B_{R+R'}}g(y-x)\dd \mu(y)\\
&\geq \frac{\E(\mu)}{4}-C \mu(\R^N\setminus B_{R'})+\int_{\R^N\setminus B_{R+R'}}g(y-x)\dd \mu(y)\\
&\geq \frac{\E(\mu)}{2}+\int_{\R^N\setminus B_{R+R'}}g(y-x)\dd \mu(y)\,.
\end{split}
\]
Thanks to Lemma~\ref{lemma:EL-E},for $\mu$-a.e. $x\in\R^N\setminus B_{R+R'}$ we have that $\psi_\mu(x)=\E(\mu)$, and for those points we obtain that
\[
\int_{\R^N\setminus B_{R+R'}}g(y-x)\dd \mu(y) \leq \frac{\E(\mu)}{2}\,.
\]
As a consequence, considering $\tilde \mu=\mu\res (\R^N\setminus B_{R+R'})$ and $\nu=\frac{\tilde \mu}{\norm{\tilde \mu}}$ (that is well-defined since $\spt \mu$ is non-compact), we have
\[
\begin{split}
\E(\nu) &= \frac{1}{\norm{\tilde \mu}^2}\E(\tilde \mu) = \frac{1}{\norm{\tilde \mu}^2} \int_{\R^N\setminus (B_{R+R'})}\left(\int_{\R^N\setminus B_{R+R'}}g(y-x)\dd \mu(y)\right)\dd \mu(x)\\
&\leq \frac{1}{\norm{\tilde \mu}^2}\int_{\R^N\setminus B_{R+R'}}\frac{\E(\mu)}{2}\dd \mu(x) = \frac{\E(\mu)}{2\norm{\tilde \mu}}\,.
\end{split}
\]
Since $\norm{\tilde \mu}\leq \mu(\R^N\setminus B_{R'})<1/3$, we get $\E(\nu)\leq \frac{3}{2}\E(\mu)$. However, $\E(\mu)<0$, and thus  $\E(\nu)<\E(\mu)$, contradicting the minimality of $\mu$.
\end{proof}

We also recall the concentration-compactness principle for the convenience of the reader, that dates back to the works of Lions \cite{L1984,L1984-1}. This well-known lemma is an essential tool to obtain compactness of minimizing sequences in variational problems. We refer to~\cite[Lemma 4.3]{S2008} for the proof of this particular version.

\begin{lemma}[Concentration compactness]\label{lemma:concentration-compactness}
Let $\mu_n\in\P(\R^N)$ be a given sequence of probability measures. Then there exists a subsequence (not relabelled) such that one of the following holds:
\begin{enumerate}\addtolength{\itemsep}{6pt}
\item \emph{(Compactness)} There exists a sequence of points $x_n\in\R^N$ such that, for every $\eps>0$, there exists $R>0$ large enough such that $\mu_n(B_R(x_n))>1-\eps$.
\item \emph{(Vanishing)} For every $\eps>0$ and every $R>0$ there exists $\bar n\in\N$ such that
\[
\mu_n(B_R(x))<\eps\qquad \forall \, x\in\R^N, \ \forall\, n>\bar n\,.
\]
\item \emph{(Dichotomy)} There exist $\lambda \in(0,1)$ and a sequence of points $x_n\in\R^N$ with the following property: for any $\eps>0$, there exist $R>0$ and two sequences of non-negative measures $\mu^1_n$ and $\mu^2_n$ so that, for any $R'>R$, for every $n$ large enough one has
\begin{gather*}
\mu^1_n+\mu^2_n\leq \mu_n\,,\\
\spt \mu^1_n\subset B_R(x_n),\quad \spt \mu^2_n\subset \R^N\setminus B_{R'}(x_n)\,,\\
\left|\mu^1_n(\R^N)-\lambda\right|+\left|\mu^2_n(\R^N)-(1-\lambda)\right|<\eps\,.
\end{gather*}
\end{enumerate}
\end{lemma}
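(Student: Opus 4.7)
The natural approach is the Lévy--Lions concentration function argument. For each $n$, set
\[
Q_n(R)\defeq \sup_{x\in\R^N}\mu_n\bigl(B_R(x)\bigr)\,, \qquad R\geq 0\,,
\]
which is non-decreasing in $R$, takes values in $[0,1]$, and satisfies $Q_n(R)\to 1$ as $R\to\infty$. Applying Helly's selection theorem on a countable dense set of radii and then extending by monotonicity, I extract a (non-relabelled) subsequence and a non-decreasing limit $Q\colon[0,\infty)\to[0,1]$ such that $Q_n(R)\to Q(R)$ at every continuity point of $Q$. Define $\lambda\defeq \lim_{R\to\infty} Q(R)\in [0,1]$. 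The three cases $\lambda=0$, $\lambda=1$, and $\lambda\in(0,1)$ will yield respectively Vanishing, Compactness, and Dichotomy. The case $\lambda=0$ is immediate: $Q_n(R)\to 0$ for every $R$, which is exactly (ii).

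If $\lambda=1$, I would obtain Compactness by a nested-balls argument. For each integer $k\geq 1$, pick $R_k$ with $Q(R_k)>1-2^{-k}$ and, for $n$ large, a point $x_n^{(k)}\in\R^N$ with $\mu_n\bigl(B_{R_k}(x_n^{(k)})\bigr)>1-2^{-k}\geq 1/2$. Since any two balls of $\mu_n$-mass greater than $1/2$ must intersect, one has $|x_n^{(k)}-x_n^{(1)}|<R_k+R_1$ and so $B_{R_k}(x_n^{(k)})\subset B_{R_1+2R_k}(x_n^{(1)})$. Setting $x_n\defeq x_n^{(1)}$ and, for a given $\eps>0$, choosing $k$ with $2^{-k}<\eps$ yields $\mu_n\bigl(B_{R_1+2R_k}(x_n)\bigr)>1-\eps$ for all $n$ large, which is (i).

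If $\lambda\in(0,1)$, I would prove Dichotomy as follows. Fix $\eps>0$ small, pick $R$ with $Q(R)>\lambda-\eps/4$, and for $n$ large pick $x_n\in\R^N$ with $\mu_n(B_R(x_n))>\lambda-\eps/2$. For any $R'>R$ taken at a continuity point of $Q$, the bound $\mu_n(B_{R'}(x_n))\leq Q_n(R')\to Q(R')\leq \lambda$ gives $\mu_n(B_{R'}(x_n))<\lambda+\eps/2$ for $n$ large. Setting
\[
\mu_n^1\defeq \mu_n\res B_R(x_n)\,, \qquad \mu_n^2\defeq \mu_n\res \bigl(\R^N\setminus B_{R'}(x_n)\bigr)\,,
\]
one obtains $\mu_n^1+\mu_n^2\leq \mu_n$, disjoint supports as required, $|\mu_n^1(\R^N)-\lambda|<\eps$, and $|\mu_n^2(\R^N)-(1-\lambda)|<\eps$, which is (iii).

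The main subtlety I anticipate lies in the Compactness case: one needs a \emph{single} sequence of centers $(x_n)$ that works simultaneously for every $\eps>0$, rather than a sequence depending on the scale. The elementary fact that two balls whose $\mu_n$-measures sum to more than $1$ must intersect is precisely what handles this, forcing the candidate centers $\{x_n^{(k)}\}_k$ to stay within a controlled neighborhood of $x_n^{(1)}$ and removing the apparent circularity of the nested-balls construction.
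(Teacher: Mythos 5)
The paper does not prove this lemma; it cites it from \cite{S2008}, so there is no in-paper argument to compare against. Your approach via the L\'evy concentration function $Q_n(R)=\sup_x\mu_n(B_R(x))$ and Helly's theorem is the standard one, and your treatment of the vanishing case ($\lambda=0$) and the compactness case ($\lambda=1$) is correct; in particular the two-balls-of-mass-$>1/2$-must-intersect device does resolve the scale-dependence of the centers there, exactly as you say.

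The gap is in the dichotomy case, and it is precisely the subtlety you flag (and handle) in the compactness case but overlook here: the statement requires a \emph{single} sequence $(x_n)$ chosen before $\eps$, whereas your construction picks $x_n$ only after $\eps$ and $R$ are fixed, via $\mu_n(B_R(x_n))>\lambda-\eps/2$. When $\lambda\in(0,1)$ this cannot be repaired by the ``two large balls intersect'' trick, because $\lambda-\eps/2$ can be well below $1/2$, so nothing forces the $\eps$-dependent centers to stay close to each other. To see that the greedy choice can genuinely go wrong, consider $\mu_n=a\,\delta_0+b\,\delta_{ne_1}+c\,\delta_{2ne_1}$ with $a=\lambda$, $b=\lambda-\eps_0/4$, $c=1-a-b$: choosing $\eps=\eps_0$ allows $x_n=ne_1$, but then $\mu_n(B_R(x_n))=\lambda-\eps_0/4$ for all $R<n$, which is $<\lambda-\eps$ for $\eps<\eps_0/4$, so this $(x_n)$ fails for smaller $\eps$. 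Fixing this needs more care (for instance, selecting $x_n$ along a slowly growing sequence of radii $R_n\to\infty$ so that $\mu_n(B_{R_n}(x_n))\to\lambda$, and combining Helly on $R\mapsto\mu_n(B_R(x_n))$ with $Q_n\to Q\leq\lambda$ to extract a fixed radius per $\eps$), and is not a one-line adjustment. A second, smaller quantifier issue: the statement introduces $\mu_n^1,\mu_n^2$ \emph{before} $R'$, so $\mu_n^2$ should not be defined as $\mu_n\res(\R^N\setminus B_{R'}(x_n))$; rather one needs a single sequence $\mu_n^2=\mu_n\res(\R^N\setminus B_{\rho_n}(x_n))$ with $\rho_n\to\infty$ chosen so that the support of $\mu_n^2$ eventually escapes every ball $B_{R'}(x_n)$, again a diagonal argument you do not carry out.
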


\bigskip

\section{Convergence of Energies and Minimizers}\label{sec:convergence}

A natural notion of convergence for functionals is that of $\Gamma$-convergence, that we recall here.

\begin{defin}[$\Gamma$-convergence]
Given a topological space $\mathbb X$ and a sequence of functionals $F_n :\mathbb X \to \overline\R$, one says that \emph{the sequence $\{F_n\}$ $\Gamma$-converges to $F:\mathbb X\to\overline \R$} if the following holds:
\begin{enumerate}
\item for any $x\in \mathbb X$ and any sequence $\{x_n\}$ such that $x_n \to x$, one has
\[
F(x) \leq \liminf F_n(x_n)\,;
\]
\item for any $x\in \mathbb X$, there exists a \emph{recovery sequence}, that is, a sequence $\{x_n\}$ with $x_n\to x$ and such that
\[
F(x) \geq \limsup F_n(x_n)\,.
\]
\end{enumerate}
\end{defin}

\medskip

The next result says that $\E_n\xrightarrow{\Gamma} \E$ in the space $\P(\R^N)$ endowed with the weak-$*$ topology.

\begin{theorem}[$\Gamma$-convergence]\label{thm:gamma-convergence}
Suppose that $g$ satisfies assumption~\emph{\ref{ass:bddbelow_lsc}}. Then $\E_n\xrightarrow{\Gamma} \E$. In particular, if $\E(\mu)<+\infty$ then any recovery sequence is necessarily given by $n$-discrete probability measures, at least for $n$ large.
\end{theorem}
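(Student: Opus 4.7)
The plan is to treat the liminf and limsup inequalities separately, the former via lower semicontinuity in the weak-$*$ topology and the latter via a probabilistic sampling construction. For the liminf, pick $\mu_n \weakstar \mu$; the case $\liminf \E_n(\mu_n) = +\infty$ is trivial, so along a subsequence we may take $\mu_n = \frac{1}{n}\sum_{i=1}^n \delta_{x_i^n}$ to be $n$-discrete. I would reformulate
\begin{equation*}
\E_n(\mu_n) = \int_{\R^N\times\R^N} g(x-y)\dd\tilde\nu_n(x,y),\qquad \tilde\nu_n \defeq \frac{1}{n^2}\sum_{1\le i\ne j\le n}\delta_{(x_i^n,x_j^n)},
\end{equation*}
and observe that $\tilde\nu_n = \mu_n\otimes\mu_n - \frac{1}{n^2}\sum_{i=1}^n\delta_{(x_i^n,x_i^n)}$ differs from $\mu_n\otimes\mu_n$ only by a diagonal correction of total mass $1/n\to 0$. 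Tightness of $\{\mu_n\}$ together with approximation of bounded continuous functions on $\R^{2N}$ by tensor products gives $\mu_n\otimes\mu_n \weakstar \mu\otimes\mu$, and hence $\tilde\nu_n \weakstar \mu\otimes\mu$ in $\mathcal{M}(\R^{2N})$. Since the integrand $(x,y)\mapsto g(x-y)$ is lower semicontinuous and bounded below by~\ref{ass:bddbelow_lsc}, the Portmanteau theorem yields $\liminf \int g(x-y)\dd\tilde\nu_n \geq \int g(x-y)\dd(\mu\otimes\mu) = \E(\mu)$.

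For the recovery sequence, the case $\E(\mu) = +\infty$ is immediate once one exhibits any $n$-discrete $\pi_n \weakstar \mu$. When $\E(\mu) < +\infty$, I would argue probabilistically: on a common probability space let $X_1, X_2, \ldots$ be i.i.d.\ with law $\mu$ and set $\pi_n \defeq \frac{1}{n}\sum_{i=1}^n \delta_{X_i}$. Varadarajan's theorem gives $\pi_n \weakstar \mu$ almost surely. Writing $C \defeq -\inf g \in [0, +\infty)$, the elementary estimate $\mathbb{E}|g(X_1-X_2)| \leq \E(\mu) + 2C < +\infty$ makes the symmetric kernel $h(x,y)\defeq g(x-y)$ eligible for Hoeffding's $L^1$ strong law of large numbers for $U$-statistics, which gives
\begin{equation*}
\frac{1}{\binom{n}{2}}\sum_{1\le i<j\le n} g(X_i-X_j) \xrightarrow{\text{a.s.}} \mathbb{E}[g(X_1-X_2)] = \E(\mu).
\end{equation*}
By central symmetry of $g$ this rescales to $\E_n(\pi_n) \to \E(\mu)$ almost surely, so any realization in the intersection of the two full-probability events is a valid recovery sequence. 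The final clause of the theorem is then automatic: a recovery sequence with $\E(\mu) < +\infty$ must eventually satisfy $\E_n(\mu_n) < +\infty$, which by the very definition of $\E_n$ forces $\mu_n$ to be $n$-discrete for $n$ large.

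The only substantive technical input I expect to rely on is the $L^1$-version of the strong law for $U$-statistics (Hoeffding); this is exactly what lets the argument handle kernels as singular at the origin as~\ref{ass:bddbelow_lsc} permits (merely $L^1_\loc$) without any separate truncation step. Everything else — tightness and tensor products for $\mu_n\otimes\mu_n \weakstar \mu\otimes\mu$, the Portmanteau inequality for integrals of lsc integrands against weakly converging measures, and the almost sure narrow convergence of empirical measures — is standard measure and probability theory.
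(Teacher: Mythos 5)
Your proof is correct, and it takes a genuinely different route from the paper, most notably for the recovery sequence. For the $\Gamma$-$\liminf$, your use of the off-diagonal empirical measure $\tilde\nu_n$ together with the Portmanteau inequality for lower semicontinuous, bounded-below integrands is a valid repackaging of what the paper does by explicitly truncating $g$ at level $K$ and invoking monotone convergence; both routes rest on the same observation that the diagonal correction contributes $g_K(0)/n \le K/n \to 0$ and that $\pi_n\otimes\pi_n\to\mu\otimes\mu$ narrowly (one should spell out that weak-$*$ convergence of probability measures to a probability measure upgrades to narrow convergence, so that Portmanteau applies). The real divergence is in the $\Gamma$-$\limsup$: the paper gives a fully deterministic, self-contained construction — it splits $\R^N$ into $l^N$ rectangles of equal $\mu$-mass with $l=\lceil n^{1/N}\rceil$, chooses one representative point per rectangle via a mean-value argument on the product measure $\Theta$, discards $l^N-n$ points, and then verifies both the energy bound and the weak-$*$ convergence $\pi_n\weakstar\mu$ by hand. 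You instead sample i.i.d.\ points $X_1,X_2,\dots$ with law $\mu$ and invoke Varadarajan's theorem for $\pi_n\weakstar\mu$ a.s.\ together with Hoeffding's $L^1$ strong law for $U$-statistics (which needs only $\E|g(X_1-X_2)|\le\E(\mu)+2|\inf g|<\infty$ and the central symmetry of $g$) for $\E_n(\pi_n)\to\E(\mu)$ a.s., so a single realization in the intersection of the two full-measure events serves as the recovery sequence. Your approach is shorter, conceptually transparent, and even gives convergence rather than just $\limsup\le$; it outsources the hard work to two classical but nontrivial probabilistic theorems, whereas the paper's quasi-Monte-Carlo construction is elementary and avoids any probabilistic input. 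The treatment of the final clause (finiteness of the energy along a recovery sequence forces $n$-discreteness) is the same in both.
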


\begin{proof}
Since by definition we have that $\E_n(\mu)=+\infty$ whenever $\mu$ is not an $n$-discrete probability meaure, we can restrict ourselves to consider those measures in the proof of the $\Gamma$-convergence. More precisely, the liminf inequality becomes trivial if all the measures of a sequence are not $n$-discrete, while otherwise the liminf coincides with the liminf of the subsequence made by the measures which are $n$-discrete. And concerning the limsup inequality, it is trivial if $\E(\mu)=+\infty$, while otherwise the recovery sequence must necessarily be done by $n$-discrete measures (up to a finite number, of course). We divide the proof in two steps, one for the liminf inequality, and one for the limsup one.

\medskip 

\noindent\emph{Step 1.} Let $\mu\in\P(\R^N)$ be given, and let $\{\pi_n\}_n$ be a sequence of $n$-discrete probability measures such that $\pi_n\weakstar \mu$. Let us fix $K>0$, and consider the truncated kernel $g_K = g\wedge K$, with the associated energy $\E^K$ and its discretizations $\E^K_n$. Since the kernel $g_K$ is bounded, we note that the discretized measures have finite energy $\E^K$. The truncated kernel $g_K$ is lower semicontinuous, hence standard arguments for weak-$*$ convergence imply that
\[
\E^K(\mu) \leq \liminf_{n\to+\infty}\E^K(\pi_n) \leq \liminf_{n\to+\infty}\E^K_n(\pi_n) + \frac{1}{n^2}\cdot nK = \liminf_{n\to+\infty}\E^K_n(\pi_n) \leq \liminf_{n\to+\infty}\E_n(\pi_n)\,.
\]
Here the second inequality follows from the trivial estimate $g_K(0) \leq K$, and the last one is a consequence of the bound $g\wedge K\leq g$. Since the parameter $K$ is free, by letting $K\to+\infty$, the monotone convergence theorem guarantees that $\E^K(\mu)\nearrow \E(\mu)$, concluding the $\Gamma$-$\liminf$ inequality.

\medskip

\noindent\emph{Step 2.} Let $\mu\in\P(\R^N)$ be a given probability measure. We have to find a recovery sequence; hence, obtain a sequence $\pi_n\weakstar \mu$ satisfying the $\limsup$ inequality. Of course, if $\E(\mu)=+\infty$ then there is nothing to do, because any sequence which converges to $\mu$ does the job. Therefore, without loss of generality, we suppose that $\E(\mu)<+\infty$, and we look for a sequence $\{\pi_n\}$ of $n$-disrete measures. Since we are interested in finding a sequence of converging measures, we can also assume that $n>3^N$.
    
Our strategy will consist in subdividing $\R^N$ in a family of rectangles, each of them having $\mu$-measure almost equal to $1/n$, and then to replace the measure $\mu$ in each rectangle with a single Dirac mass, chosen in a suitable way inside the corresponding rectangle. Let us be precise; we start defining $l=\lceil\sqrt[N]{n}\ \rceil$, so that $l^N$ is larger than $n$, but the ratio $l^N/n$ goes to $1$. Then, we define the numbers $-\infty=a_1\leq a_2 \leq \, \dots \, \leq a_l \leq a_{l+1}=+\infty$ as
\begin{equation}\label{defstrips}
a_h = \min \bigg\{ t\in \overline\R,\, \mu\Big((-\infty, t]\times \R^{N-1}\Big)\geq \frac {h-1}l \bigg\}\,.
\end{equation}
Then, we can subdivide $\mu=\mu^1+\mu^2+ \cdots + \mu^l$, where each measure $\mu^i$ is concentrated in the strip $[a_i,a_{i+1}]\times\R^{N-1}$ and it has mass exactly $1/l$. Repeating the same construction in each strip working with the other coordinates, we end up having subdivided $\R^N$ in $l^N$ closed rectangles $\RR_i$ (some of which unbounded), and having written $\mu=\sum_{i=1}^{l^N} \mu_i$, where each measure $\mu_i$ has mass $1/l^N$ and it is concentrated on $\RR_i$. 

We can then estimate the cross-interaction between the different parts $\mu_i$ as
\begin{equation}\label{eq:cross-interactions}
\sum_{i\neq j}\E(\mu_i,\mu_j) = \E(\mu) - \sum_{i=1}^{l^N} \E(\mu_i)
\leq \E(\mu) + \frac{|\inf g|}{(l^N)}\,.
\end{equation}
Now, the energy $\E(\mu_i,\mu_j)$ is the ``average'', in the sense of the measures $\mu_i$ and $\mu_j$, of the value of $g(z_i - z_j)$ with points $z_i\in\RR_i$ and $z_j\in\RR_j$; therefore, one can easily guess that it is possible to choose a special point $x_i$ in each rectangle $\RR_i$ in such a way that
\begin{equation}\label{choicexi}
\frac 1 {l^{2N}} \sum_{i\neq j} g(x_i-x_j) \leq \sum_{i\neq j} \E(\mu_i,\mu_j)\,.
\end{equation}
Formally speaking, we define the measure $\Theta= \mu_1\otimes \mu_2\otimes \cdots \otimes \mu_{l^N}$ on the space $(\R^N)^{l^N}$, we define the function $G\colon (\R^N)^{l^N}\to \R$ as
\[
G(z_1,\ldots,z_{l^N}) \coloneqq \sum_{i\neq j}g(z_i-z_j)\,,
\]
and we notice that
\[\begin{split}
\sum_{i\neq j} \E(\mu_i,\mu_j) &= \sum_{i\neq j} \iint g(z_i-z_j) \dd \mu_i(z_i)\dd\mu_j(z_j)
= (l^N)^{l^N-2}\,\sum_{i\neq j} \int g(z_i-z_j) \dd \Theta(z_1,\, \dots\,,\, z_{l^N})\\
&= (l^N)^{l^N-2}\, \int G(z_1,\, z_2,\, \,,\, \dots\,,\, z_{l^N}) \dd \Theta(z_1,\, \dots\,,\, z_{l^N})\,.
\end{split}\]
Then, there is a point $X=(x_1,\, x_2,\, \dots\,,\, x_{l^N})$ in the support of $\Theta$ such that
\[
G(X) \leq \intmed G(Z)\dd\Theta(Z) = (l^N)^{l^N}\int G(Z) \dd\Theta(Z)\,,
\]
which by the above estimate precisely implies~(\ref{choicexi}). In particular, the fact that $X$ belongs to the support of $\Theta$ implies that each $x_i$ belongs to the rectangle $\RR_i$.

Now, we are ready to define the $n$-discrete measure $\pi_n$. The above construction would suggest to set $\pi_n$ as the sum of the Dirac masses in the $l^N$ points $x_i$, each with mass $1/l^N$; this is not possible since $l^N$ is slightly larger than $n$, so we need a simple adjustment. More precisely, we define the $n$-discrete measure $\pi_n = \frac 1n \sum_{i=1}^n \delta_{x_i}$, so basically we ``ignore'' the last $l^N-n$ points.

Keeping in mind the definition of the energy $\E_n$, and using~(\ref{choicexi}) and~(\ref{eq:cross-interactions}), we have then
\[\begin{split}
\E_n(\pi_n)&= \frac 1 {n^2} \sum_{1\leq i\neq j\leq n} g(x_i-x_j)
\leq \frac 1 {n^2} \sum_{1\leq i\neq j\leq l^N} g(x_i-x_j) + \frac{2l^N(l^N-n)}{n^2} |\inf g|\\
&\leq \frac {l^{2N}}{n^2} \sum_{i\neq j} \E(\mu_i,\mu_j)+ \frac{2l^N(l^N-n)}{n^2} |\inf g|
\leq \frac {l^{2N}}{n^2} \,\E(\mu) + \frac{2l^N(l^N-n+1)}{n^2} |\inf g|\,.
\end{split}\]
Since by construction $l^N/n$ converges to $1$, the above estimate yields the limsup inequality
\[
\E(\mu) \geq \limsup \E_n(\pi_n)\,,
\]
and this fact will conclude the proof once we check that actually $\pi_n\weakstar \mu$.
    
In order to establish this convergence, let us fix a function $\phi\in C_c(\R^N)$, and let us call $\omega_\phi$ the corresponding modulus of continuity. Let $\eps>0$ be fixed, and let us take any $n\in\N$. Consider an index $1\leq i \leq l^N$ such that the rectangle $\RR_i$ of the above construction has all sides shorter than $\eps$. For such $i$, keeping in mind that $\mu_i$ has mass $1/l^N$, we estimate
\begin{equation}\label{goodrectangles}\begin{split}
\bigg|\int_{\RR_i} \phi(x)\dd \mu_i(x) - \frac{\phi(x_i)}n\bigg| &= \bigg|\int_{\RR_i} (\phi(x)-\phi(x_i))\dd \mu_i(x) + \phi(x_i) \bigg(\frac 1{l^N}-\frac 1n \bigg) \bigg|\\
&\leq \frac{\omega_\phi(\eps\sqrt N)}{l^N} + \sup |\phi| \bigg(\frac 1n-\frac 1{l^N} \bigg)\,.
\end{split}\end{equation}
Let now $D$ be a large constant such that the support of $\phi$ is contained in the cube $Q=[-D/2,D/2]^N$. For every $n$, we call $I_n\subseteq \{ 1,\, 2,\, \dots\, ,\, l^N\}$ the set of the indices $i$ for which the rectangle $\RR_i$ intersects the cube $Q$ and has some side larger than $\eps$. In order to estimate the number of indices in $I_n$, we note that, among the strips $[a_i,a_{i+1}]\times \R^{N-1}$ defined through~(\ref{defstrips}), at most $D/\eps+2$ can be those which intersect the cube $Q$ and whose width is larger than $\eps$. As a consequence, and doing the same estimate in all the directions, we immediately have
\begin{equation}\label{ShIn}
\# I_n \leq N\bigg(\frac D \eps + 2\bigg) l^{N-1} \,.
\end{equation}
As a consequence, keeping in mind~(\ref{goodrectangles}) we can then estimate
\[\begin{split}
\bigg|\int \phi\dd\mu &- \int \phi\dd \pi_n\bigg|=
\bigg|\sum_{i=1}^{l^N}  \int_{\RR_i}\phi\dd \mu_i - \sum_{i=1}^n \frac{\phi(x_i) }n\bigg|\\
&\leq \bigg|\sum_{i=1}^{l^N}  \int_{\RR_i}\phi\dd \mu_i - \frac{\phi(x_i) }n\bigg| + \frac{l^N-n}n\, \sup |\phi|\\
&\leq \sum_{i\notin I_n} \bigg| \int_{\RR_i}\phi\dd \mu_i - \frac{\phi(x_i) }n\bigg| + \sum_{i\in I_n} \bigg| \int_{\RR_i}\phi\dd \mu_i - \frac{\phi(x_i) }n\bigg|+ \frac{l^N-n}n\, \sup |\phi|\\
&\leq \omega_\phi(\eps\sqrt N) + \sup |\phi| \bigg(\frac {l^N-n}n \bigg)+\sup |\phi| \bigg(\frac 1{l^N} + \frac 1n\bigg)\# I_n+ \frac{l^N-n}n\, \sup |\phi|\,,
\end{split}\]
and then by~(\ref{ShIn}) we get
\[
\limsup_{n\to\infty} \bigg|\int \phi\dd\mu - \int \phi\dd \pi_n\bigg| \leq \omega_\phi(\eps\sqrt N)\,.
\]
Since this inequality holds for any $\eps>0$, the desired convergence $\pi_n\weakstar \mu$ is established, and the proof is completed.
\end{proof}

The next result relates the ground states of the discrete energies to the minimum of the continuum energy. For every $n\in\N$ we define the ground state energy and the limiting ground state energy by
\begin{align*}
m_n \defeq \inf \E_n \,, && \ell_P \defeq \liminf_{n\to+\infty}m_n\,.
\end{align*}

\medskip

\begin{prop}\label{prop:limit-particles}
Suppose that $g$ satisfies~\emph{\ref{ass:bddbelow_lsc}}, \emph{\ref{ass:zerolimit}}, and~\emph{\ref{ass:negativemeas}}.  Then
\begin{equation}\label{eq:limit-min}
\ell_P=\lim_{n\to+\infty}m_n=\inf \left\{\E(\mu) \colon \mu\in\P(\R^N)\right\}\,.
\end{equation}
Additionally, any sequence $\pi_n$ for which $\lim_{n\to+\infty}\E_n(\pi_n)$ equals $\ell_P$ is precompact in $\P(\R^N)$ with respect to the weak-$*$ convergence (up to translations), and any limit point is a minimizer of $\E$ in $\P(\R^N)$.
\end{prop}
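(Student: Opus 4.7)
The plan is to combine Theorem~\ref{thm:gamma-convergence} (which supplies a recovery sequence and the $\Gamma$-liminf bound) with a concentration-compactness argument (Lemma~\ref{lemma:concentration-compactness}) to rule out loss of mass in almost-minimizing sequences of $\E_n$.

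First I would use the recovery-sequence half of Theorem~\ref{thm:gamma-convergence}: for any $\mu\in\P(\R^N)$ one can build $\pi_n\weakstar\mu$ with $\limsup \E_n(\pi_n)\leq \E(\mu)$, and since $m_n\leq \E_n(\pi_n)$ this yields $\limsup_n m_n\leq \inf \E$. By assumption~\ref{ass:negativemeas} this common upper bound is strictly negative, so in particular $\ell_P<0$; this strict negativity is the engine that powers the rest of the argument.

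Next, given an arbitrary sequence $\pi_n=\tfrac{1}{n}\sum_{i=1}^n \delta_{x_i^n}$ with $\E_n(\pi_n)\to \ell_P$, I would apply Lemma~\ref{lemma:concentration-compactness} and show that both the vanishing and dichotomy alternatives contradict $\ell_P<0$. For \emph{vanishing}: fix $\delta>0$, use~\ref{ass:zerolimit} to choose $R$ with $g\geq-\delta$ outside $B_R$, and split the pair-sum according to whether $|x_i^n-x_j^n|\leq R$ or not. Vanishing forces each particle to have at most $\varepsilon n$ neighbours inside $B_R$, and combined with the lower bound $g\geq -C$ from~\ref{ass:bddbelow_lsc} a pair-counting estimate gives $\E_n(\pi_n)\geq -\varepsilon C-\delta$, so letting $n\to\infty$ and then $\varepsilon,\delta\to 0$ produces $\ell_P\geq 0$. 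For \emph{dichotomy}: one obtains two well-separated clusters $A_n$ and $B_n$ of sizes $k_n$ and $j_n$ with $k_n/n\to\lambda$, $j_n/n\to 1-\lambda$, separated by an arbitrarily thick annulus which contains only $o(n)$ particles. Writing the pair-sum block by block, the ``both inside $A$'' and ``both inside $B$'' blocks give
\[
\tfrac{k_n^2}{n^2}\E_{k_n}(\pi_n^A)+\tfrac{j_n^2}{n^2}\E_{j_n}(\pi_n^B)\geq \tfrac{k_n^2}{n^2}m_{k_n}+\tfrac{j_n^2}{n^2}m_{j_n};
\]
the $A$–$B$ cross term is $\geq -o(1)$ again by~\ref{ass:zerolimit}, and the pairs involving the middle annulus are negligible because only $o(n)$ particles sit there and $g$ is bounded below. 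Using $\liminf m_{k_n}\geq \ell_P$ and $\liminf m_{j_n}\geq \ell_P$, passing to the limit produces $\ell_P \geq (1-2\lambda(1-\lambda))\ell_P$, which is impossible since $\lambda(1-\lambda)>0$ and $\ell_P<0$.

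Only the compactness alternative remains, so up to translations and extraction one has $\pi_n\weakstar \mu$ for some $\mu\in\P(\R^N)$. The $\Gamma$-liminf inequality in Theorem~\ref{thm:gamma-convergence} then gives $\E(\mu)\leq \ell_P$, and sandwiching $\inf\E\leq \E(\mu)\leq \ell_P=\liminf m_n\leq \limsup m_n\leq \inf\E$ forces every inequality to be an equality. This simultaneously identifies $\mu$ as a minimizer of $\E$, proves~\eqref{eq:limit-min}, and yields the claimed precompactness and convergence for any sequence with $\E_n(\pi_n)\to \ell_P$. The delicate step I expect is the dichotomy analysis, where one has to handle the middle-annulus particles separately and verify that the error terms vanish \emph{before} using the sign $\ell_P<0$ to extract the contradiction from $\ell_P\geq (1-2\lambda(1-\lambda))\ell_P$; everything else is a routine consequence of Theorem~\ref{thm:gamma-convergence}.
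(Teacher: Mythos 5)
Your proposal follows essentially the same route as the paper: establish $\limsup m_n\leq\inf\E<0$ via the recovery sequence of Theorem~\ref{thm:gamma-convergence}, apply Lemma~\ref{lemma:concentration-compactness} to an almost-minimizing sequence, exclude vanishing by a pair-counting estimate with~\ref{ass:zerolimit} and the lower bound on $g$, exclude dichotomy by splitting the pair sum into the two clusters, the cross term, and the $o(n)$ leftover particles to reach $\ell_P\geq(\lambda^2+(1-\lambda)^2)\ell_P$ (equivalently your $\ell_P\geq(1-2\lambda(1-\lambda))\ell_P$), and finish with the $\Gamma$-liminf inequality. This is the paper's proof, organized in the same order and with the same key estimates.
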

\begin{proof}
First of all, the $\Gamma$-$\limsup$ inequality of Theorem~\ref{thm:gamma-convergence} implies that for every $\mu\in\P(\R^N)$ there is a recovery sequence $\pi_n$, and $\E(\mu)\geq \limsup \E_n(\pi_n)\geq \limsup m_n$. Since $\mu$ is generic, this implies that $\ell_P\leq \limsup m_n\leq \inf\{\E(\mu)\}\leq \E(\mu_0)<0$, where $\mu_0$ is a measure of strictly negative energy, which exists by~\ref{ass:negativemeas}.

Let now $\{\pi_{n_k}\}$ be any sequence such that $\E_{n_k}(\pi_{n_k})\to \ell_P$. We claim that the sequence is precompact in $\P(\R^N)$ with respect to the weak-$*$ topology (up to translations). If this is true, then there is a subsequence converging to some measure $\mu\in\P(\R^N)$, and the $\Gamma$-liminf inequality of Theorem~\ref{thm:gamma-convergence} gives that $\E(\mu)\leq \liminf \E_{n_k}(\pi_{n_k})=\ell_P$. Hence, the proof will be completed once we show the validity of the claim, that is, that the sequence $\pi_{n_k}$ is precompact in $\P(\R^N)$ up to translations.

Without loss of generality, we can assume that each measure $\pi_{n_k}$ is $n_k$-discrete, and concentrated on $n_k$ points. With a slight abuse of notation, we will denote these points by $x_1,\, x_2,\, \dots\,,\, x_{n_k}$ (the formally correct notation would be $x^{n_k}_1,\, x^{n_k}_2,\, \dots\,,\, x^{n_k}_{n_k}$, but since there is no risk of confusion we prefer to keep the lighter notation). Thanks to Lemma~\ref{lemma:concentration-compactness}, we only have to exclude the possibility that either vanishing or dichotomy occur.

\medskip

\noindent\emph{Excluding vanishing.} Suppose that the vanishing phenomenon in Lemma~\ref{lemma:concentration-compactness} occurs for a (not relabelled) subsequence of $\{\pi_{n_k}\}_k$. Let $\eps>0$ and take $R>0$ such that $g>-\eps$ in $\R^N\setminus B_R$. Then, there exists $\bar n\in\N$ such that $\pi_{n_k}(B_R(x))<\eps$ for every $x\in\R^N$ whenever $n_k>\bar n$. Thus, we can control the energy of $\pi_{n_k}$ as
\[\begin{split}
\E_{n_k}(\pi_{n_k})
&=\frac{1}{n_k^2}\sum_{i=1}^{n_k}\bigg( \sum_{\substack{j\neq i \\ x_j\in B_R(x_i)}} g(x_i-x_j)+\sum_{\substack{j\neq i \\ x_j\not\in B_R(x_i)}}g(x_i-x_j)\bigg)\\
&\geq \frac{1}{n_k}\sum_{i=1}^{n_k}\bigg(\big(\inf g\big) \pi_{n_k}\big(B_R(x_i)\setminus\{x_i\}\big)-\eps \pi_{n_k}\big(\R^N\setminus B_R(x_i)\big)\bigg)
\geq \eps \big(\inf g -1 \big)\,,
\end{split}\]
where the last inequality is true since $\inf g<0$. We deduce that $\liminf \E_{n_k}(\pi_{n_k})\geq 0$, which is impossible since $\E_{n_k}(\pi_{n_k})\to \ell_P<0$; thus, the vanishing phenomenon is excluded.
    
\medskip
    
\noindent\emph{Excluding dichotomy.} Suppose now that the dichotomy phenomenon in Lemma~\ref{lemma:concentration-compactness} occurs for a (nor relabelled) subsequence of $\{\pi_{n_k}\}$. In this case, the family of points $x_1,\ldots,x_{n_k}$ representing $\pi_{n_k}$ contains two subfamilies that interact very weakly. More precisely, up to reordering the points we have the family $x_1,\, \ldots,\, x_{a_k}$ with $\frac{a_k}{n_k}\to \lambda\in (0,1)$, and the family $x_{a_k+1},\, \dots\,,\, a_{a_k+b_k}$ with $\frac{b_k}{n_k}\to 1-\lambda$ so that the distance between any two points in the two different families is arbitrarily large if $k$ is large enough. Let us then call $\mu_k^1$ and $\mu_k^2$ the probability measures uniformly distributed on the two subfamilies, i.e.
\begin{align*}
\mu_k^1 = \frac{1}{a_k}\sum_{i=1}^{a_k}\delta_{x_i}\,, && \mu_k^2 = \frac{1}{b_k}\sum_{i=a_k+1}^{a_k+b_k}\delta_{x_i}\,.
\end{align*}
Since the distances between points in the two subfamilies become arbitrarily large as $k\to+\infty$, and keeping in mind~\ref{ass:zerolimit}, we have
\begin{equation}\label{eq:weak-interaction-dichotomy}
\liminf_{k\to+\infty}\min \Big\{g(x_i-x_j)\colon 1\leq i \leq a_k, \ a_k+1\leq j \leq a_k+b_k\Big\}=0\,.
\end{equation}
We can then write
\[\begin{split}
\E_{n_k}(\pi_{n_k})&=\frac{1}{n_k^2}\sum_{1\leq i\neq j\leq n_k} g(x_i-x_j)\\
&=\frac{a_k^2}{n_k^2} \,\E_{a_k}(\mu^1_k)+\frac{b_k^2}{n_k^2} \,\E_{b_k}(\mu^2_k)+\frac{2}{n_k^2}\sum_{i=1}^{a_k}\sum_{j=a_k+1}^{a_k+b_k}g(x_i-x_j)
+\frac{2}{n_k^2}\sum_{\substack{i\neq j\\ i\wedge j>a_k+b_k}} g(x_i-x_j)\,.
\end{split}\]
Now, (\ref{eq:weak-interaction-dichotomy}) implies that
\[
\liminf_{k\to+\infty} \frac{2}{n_k^2}\sum_{i=1}^{a_k}\sum_{j=a_k+1}^{a_k+b_k}g(x_i-x_j)\geq 0\,,
\]
and the fact that $(a_k+b_k)/n_k\to 1$ and that $\inf g>-\infty$ implies that
\[
\liminf_{k\to\infty} \frac{2}{n_k^2}\sum_{\substack{i\neq j\\ i\wedge j>a_k+b_k}} g(x_i-x_j)\geq 0\,.
\]
As a consequence, we obtain
\[\begin{split}
\ell_P&=\lim \E_{n_k}(\pi_{n_k})\geq \liminf \frac{a_k^2}{n_k^2}\, \E_{a_k}(\mu^1_k)+\liminf \frac{b_k^2}{n_k^2}\, \E_{b_k}(\mu^2_k)\\
&\geq \liminf \frac{a_k^2}{n_k^2}\, m_{a_k}+\liminf \frac{b_k^2}{n_k^2}\, m_{b_k}
= \ell_P \big(\lambda^2 + (1-\lambda)^2\big)\,,
\end{split}\]
which is impossible since $\ell_P<0$ and $\lambda^2+(1-\lambda)^2<1$. The contradiction excludes also the dichotomy phenomenon, and as noticed above this concludes the thesis.
\end{proof}

\begin{remark}
    We stress that the pre-compactness result in Proposition~\ref{prop:limit-particles} holds only for (asymptotically) minimal discrete measures. Under our set of assumptions we cannot expect to have pre-compactness for sequences of measures with bounded energy: if $g$ is bounded at infinity, we could arrange the particles to have uniformly bounded interaction between each other (and this phenomenon was already pointed out in \cite[Section~3]{KP2021} in a slightly different setting). Instead, if we work with an external confining potential (as in \cite{Ser2015,Ser2024pp}), or with a kernel $g$ that grows indefinitely at $\infty$ (as the kernel considered in \cite[Section~3]{F2023}), then sequences with uniformly bounded energy are pre-compact in weak-$*$ topology. Finally, we point out that in \cite[Theorem~2.4]{CKNP2023} the authors give a pre-compactness result (in a different setting) under an a-priori growth bound of the diameter of the supports of the measures.
\end{remark}

\bigskip

\section{Existence of Discrete Minimizers}\label{sec:existence}

In this last section, we are going to prove that there exist minimizers for the energy $\E_n$ for all $n$ large enough; this will be achieved with Theorem~\ref{thm:existence}. Simple examples show that the sole assumption~\ref{ass:bddbelow_lsc} does not guarantee this existence; on the contrary, it is also possible that for every large $n$ there are no minimizers. To introduce the question, we give a simple proof of a weaker result (which is not needed in the proof of Theorem~\ref{thm:existence}); namely, that with rather weak assumptions there are infinitely many indices for which a minimizer exists.

\begin{prop}\label{prop:frequent-existence}
Suppose that $g$ satisfies~\emph{\ref{ass:bddbelow_lsc}} and~\emph{\ref{ass:negativemeas}}, and that $\lim_{|x|\to\infty} g(x)=0$. Then there exists a sequence of indices $n_k\nearrow+\infty$ such that $m_{n_k}$ is a minimum.
\end{prop}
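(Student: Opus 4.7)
My plan is to argue by a strict-superadditivity scheme. Set $\phi(n)\defeq -n^2 m_n$, with $\phi(1)=0$. Two preliminary facts drive the proof. First, for every $a+b=n$ with $a,b\geq 1$, one has the upper bound
\[
m_n \;\leq\; \frac{a^2}{n^2}\,m_a + \frac{b^2}{n^2}\,m_b\,,
\]
obtained by taking near-optimal configurations of sizes $a$ and $b$, translating one of them by a vector of arbitrarily large norm, and observing that the cross contributions vanish because $\lim_{|x|\to\infty}g(x)=0$. Equivalently, $\phi(n)\geq \phi(a)+\phi(b)$, so $\phi$ is superadditive on $\N$. Second, $\phi(n)$ grows quadratically: since $\lim g=0$ implies~\ref{ass:zerolimit}, Proposition~\ref{prop:limit-particles} applies and yields $m_n\to \ell_P<0$ under~\ref{ass:negativemeas}, hence $\phi(n)/n^2\to -\ell_P>0$.

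Next I claim that strict superadditivity, $\phi(n)>\phi(a)+\phi(b)$ for all $a+b=n$, $a,b\geq 1$, must hold for infinitely many $n$. Otherwise, there exists $N_0$ with $\phi(n)=\max_{a+b=n,\,a,b\geq 1}\bigl(\phi(a)+\phi(b)\bigr)$ for every $n\geq N_0$; iterating the optimal split whenever a resulting part is still $\geq N_0$, after finitely many steps one writes
\[
\phi(n)=\phi(n_1)+\phi(n_2)+\cdots+\phi(n_s)\,,\qquad n_1+\cdots+n_s=n\,,\quad n_i<N_0\,.
\]
Setting $C\defeq \max_{1\leq j<N_0}\phi(j)$, this gives $\phi(n)\leq sC\leq nC$, a linear bound contradicting the quadratic growth of $\phi$.

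Finally, for any $n$ that satisfies the strict inequality, I show $m_n$ is attained. Take a minimizing sequence $(x_1^k,\dots,x_n^k)$. Since there are only finitely many pairs, up to a subsequence each distance $|x_i^k-x_j^k|$ is either bounded or divergent; the relation ``stays bounded'' is an equivalence relation (by the triangle inequality) partitioning $\{1,\dots,n\}$ into classes $G_1,\dots,G_p$ of sizes $n_1,\dots,n_p$, with inter-class distances tending to $+\infty$. Translating each class so that one reference particle is fixed, a further subsequence makes the particles of $G_r$ converge to a configuration $(y_1^r,\dots,y_{n_r}^r)$. Decomposing
\[
\E_n(x^k)=\sum_{r=1}^p \frac{n_r^2}{n^2}\,\E_{n_r}\bigl(x^k|_{G_r}\bigr)+\frac{1}{n^2}\sum_{r\neq s}\,\sum_{\substack{i\in G_r \\ j\in G_s}}g(x_i^k-x_j^k)\,,
\]
the cross terms vanish as $k\to\infty$ (each $g(x_i^k-x_j^k)\to 0$, and there are finitely many such terms), while lower semicontinuity of $g$ (Fatou applied to $g-\inf g\geq 0$) yields $\liminf_k \E_{n_r}(x^k|_{G_r})\geq \E_{n_r}(y^r)\geq m_{n_r}$. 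Hence $m_n\geq \sum_{r=1}^p (n_r^2/n^2)\,m_{n_r}$. If $p\geq 2$, iterating the superadditivity of step~1 on $\{n_2,\dots,n_p\}$ gives $\sum_{r\geq 2}\frac{n_r^2}{n^2}m_{n_r}\geq \frac{(n-n_1)^2}{n^2}m_{n-n_1}$, so that $m_n\geq \frac{n_1^2}{n^2}m_{n_1}+\frac{(n-n_1)^2}{n^2}m_{n-n_1}$, contradicting the strict inequality for this $n$. Therefore $p=1$, the configuration is globally bounded up to a translation, and lower semicontinuity of $g$ gives a minimizer $(y_1,\dots,y_n)$ of $\E_n$.

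The main delicate point I expect is the last paragraph: one must correctly reduce an arbitrary $p$-class split ($p\geq 2$) to a binary one in order to contradict the strict inequality, which works because the superadditivity from step~1 has exactly the right direction with respect to $\phi$. The quadratic-vs-linear growth contradiction in step~2 is the heart of the argument.
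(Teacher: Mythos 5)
Your proof is correct, and it takes a genuinely different route from the paper's. Both approaches ultimately use Proposition~\ref{prop:limit-particles} to get $m_n\to\inf\E<0$, and both classify a minimizing sequence for $\E_n$ into groups of particles whose mutual distances stay bounded. But the logical structure differs. The paper argues directly by contradiction: assuming no minimizer exists for any $n>n_e$, it shows that the limit of each bounded-distance class must itself realize $m_H$ (by a substitution argument), hence each class has at most $n_e$ points; summing $\sum_m H_m^2\le n_e\sum_m H_m=n_e n$ then gives $m_n\ge (n_e/n)\inf g\to 0$, contradicting $m_n\to\inf\E<0$. You instead extract the algebraic core: the superadditivity $\phi(n)\ge\phi(a)+\phi(b)$ for $\phi(n)=-n^2 m_n$, the quadratic growth $\phi(n)/n^2\to-\ell_P>0$, and the elementary fact that a superadditive sequence with quadratic growth cannot satisfy equality in the splitting for all large $n$ (else it would be at most linear by iterating the splits down to sizes below a threshold). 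You then show that strict superadditivity at $n$ forbids a minimizing sequence from breaking into $p\ge 2$ clusters, because the resulting bound $\phi(n)\le\sum_r\phi(n_r)\le\phi(n_1)+\phi(n-n_1)$ contradicts strictness. This is a cleaner, more structural argument, and it makes transparent which $n$ are guaranteed to admit minimizers (those where $\phi$ is strictly superadditive); the cost is that it relies on the full convergence $m_n\to\inf\E$ rather than only $\liminf m_n<0$, but that is also needed in the paper's version. The small points worth flagging: the superadditivity step needs that the cross-interactions vanish when one near-optimal $a$-configuration is translated far from a near-optimal $b$-configuration, which indeed follows from $\lim_{|x|\to\infty}g(x)=0$ since for each fixed competitor the point sets are finite (hence bounded); and in the last step your lower-semicontinuity passage, $\E_{n_r}(y^r)\le\liminf_k\E_{n_r}(x^k|_{G_r})$, is sound even when $g(0)=+\infty$, because such a class would force $\E_n(x^k)\to+\infty$ and hence cannot occur along a minimizing sequence.
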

\begin{proof}
We argue by contradiction, and we assume that there exists a value $n_e\in\N$ such that $m_n$ is not attained for any $n>n_e$. Let $n>n_e$ be any fixed number, and let $\{\pi^h\}$ be a sequence of measures such that $\E_n(\pi^h) \to m_n$. Notice that there can be no minimizer for $\E_n$, thus $m_n<+\infty$, and we can assume that all measures $\pi^h$ are $n$-discrete; hence, $\pi^h$ is concentrated on the points $x^h_1,\, x^h_2,\, \dots\,,\, x^h_n$. Up to a subsequence, we can assume that for any $i,\,j\in \{1,\, 2,\, \dots\,,\, n\}$ the sequence of distances $|x^h_i-x^h_j|$ converges to a number $d_{i,j}\in [0,+\infty]$ as $h\to +\infty$. Note that by construction $d_{i,j}\leq d_{i,m}+d_{m,j}$ for any three indices $i,\,j,\, m\in \{1,\,2,\, \dots\,,\, n\}$. In particular, $\{1,\,2,\,\dots\,,\, n\}$ is subdivided in some classes of indices, in such a way that $d_{i,j}<+\infty$ if and only if $i$ and $j$ belong to the same class. Up to renumbering, we can assume that one class is $I=\{1,\, 2,\, \dots\,,\, H\}$ for some $1\leq H\leq n$. Up to a translation, we can assume that $x^h_i$ converges to a point $P_i\in\R^N$ for every $1\leq i\leq H$. Calling $\pi_I$ the $H$-discrete measure associated to the points $P_1,\, P_2,\, \dots\, ,\, P_H$, that is, $\pi_I = \frac 1 H \sum_{h=1}^H \delta_{P_h}$, we claim that
\begin{equation}\label{minopt}
m_H = \E_h (\pi_I)\,.
\end{equation}
To show this property, let us call $\pi^h_1$ (resp., $\pi^h_2$) the $H$-discrete (resp., $(n-H)$-discrete) measure associated with the first $H$ (resp., last $n-H$) points of the support of $\pi^h$, that is,
\begin{align*}
\pi^h_1 = \frac 1H\, \sum_{i=1}^H \delta_{x^h_i}\,, && \pi^h_2 = \frac 1 {n-H}\, \sum_{i=H+1}^n \delta_{x^h_i}\,,
\end{align*}
and let us also call $d^h$ the minimum of the distances $|x^h_i-x^h_j|$ for $i$ and $j$ belonging to two different classes. Note that $d^h\to \infty$ as $h\to\infty$, and then $\eta^h\to 0$, where $\eta^h = \sup \big\{ g(v)\colon |v|\geq d^h\big\}$. We can now evaluate
\begin{equation}\label{firsttry}\begin{split}
\E_n(\pi^h) &= \frac 1{n^2} \sum_{1\leq i\neq j\leq n} g(x^h_i-x^h_j)\\
&=\frac 1{n^2} \sum_{1\leq i\neq j \leq H} g(x^h_i-x^h_j) +
\frac 1{n^2} \sum_{H< i\neq j \leq n} g(x^h_i-x^h_j) +
\frac 2{n^2} \sum_{1\leq i\leq H< j \leq n} g(x^h_i-x^h_j)\\
&\geq \frac{H^2}{n^2} \, \E_H(\pi^h_1) + \frac {(n-H)^2}{n^2}\,\E_{n-H}(\pi^h_2) -\frac {2H(n-H)}{n^2}\,\eta^h \,.
\end{split}\end{equation}
Now, if~(\ref{minopt}) is false, there is a $H$-discrete measure $\tilde\pi_I$ such that $\E_H(\tilde\pi_I)<\E_H(\pi_I)$. We can then define the $n$-discrete measure
\[
\tilde\pi^h = \frac Hn\, \tilde\pi_I + \frac{n-H}n\,\tilde\pi^h_2\,,
\]
where the $(n-H)$-discrete measure $\tilde\pi^h_2$ coincides with $\pi^h_2$, possibly up to a translation in such a way that the distance between any point in the support of $\tilde\pi_I$ and any point in the support of $\tilde\pi^h_2$ is at least $d^h$. Arguing exactly as in~(\ref{firsttry}), only estimating the last term in the second line from above instead that from below, we get
\[
\E_n(\tilde\pi^h) \leq \frac{H^2}{n^2} \, \E_H(\tilde\pi_I) + \frac {(n-H)^2}{n^2}\,\E_{n-H}(\tilde\pi^h_2) +\frac {2H(n-H)}{n^2}\,\eta^h\,.
\]
Then by~(\ref{firsttry}), and keeping in mind that $\pi^h_1 \weakstar \pi_I$, we deduce
\[\begin{split}
\liminf_{h\to\infty} \E_n(\tilde\pi^h) &- \E_n(\pi^h)\leq \liminf \bigg(\frac{H^2}{n^2} \, \Big(\E_H(\tilde\pi_I) - \E_H(\pi^h_1)\Big)+\frac {4H(n-H)}{n^2}\,\eta^h\bigg)\\
&=\frac{H^2}{n^2} \liminf \Big( \E_H(\tilde\pi_I) - \E_H(\pi^h_1)\Big)
\leq \frac{H^2}{n^2} \, \Big( \E_H(\tilde\pi_I) - \E_H(\pi_I) \Big)<0\,.
\end{split}\]
This contradicts the fact that the sequence $\{\pi^h\}$ realizes the infimum in the definition of $m_H$; hence,~(\ref{minopt}) is proved. In particular, since we are assuming that the minimum is never attained for more than $n_e$ points, we deduce that $H\leq n_e$.\par

Summarizing, by the above argument we have observed that \emph{each of the classes} in which we have subdivided $\{1,\, 2,\,\dots\,,\, n\}$ has at most $n_e$ points. Calling these classes $H_1,\, H_2,\, \dots\, ,\, H_M$ with $\sum_{1\leq m\leq M} H_m  = n$, we have then $H_m\leq n_e$ for each $1\leq m\leq M$. For every $h$ large, arguing as before it is convenient to write, for each $1\leq m\leq M$,
\[
\pi^h_m = \frac 1{H_m} \, \sum_{i=H_1+\cdots + H_{m-1}+1}^{H_1+\cdots + H_m} \delta_{x^h_i}\,,
\]
so that
\[
\pi^h = \sum_{m=1}^M \frac{H_m}n\,\pi^h_m\,.
\]
By definition, any two points $x^h_i$ and $x^h_j$ so that $i$ and $j$ belong to two different classes have distance at least $d^h$, hence an interaction of at most $\eta^h$. But then, evaluating the energy as in~(\ref{firsttry}), we get
\[
\E_n(\pi^h)\geq \sum_{m=1}^M \frac{H_m^2}{n^2} \, \E_{H_m} (\pi^h_m) - n(n-1)\eta^h
\geq \sum_{m=1}^M \frac{H_m^2}{n^2} \,\inf g - n(n-1)\eta^h\,,
\]
which, keeping in mind that $\inf g<0$ by~\ref{ass:negativemeas} and letting $h\to\infty$, implies
\[
m_n = \lim \E_n(\pi^h)\geq \liminf\sum_{m=1}^M \frac{H_m^2}{n^2}\, \inf g
\geq \lim \sum_{m=1}^M \frac{H_m n_e}{n^2} \,\inf g
= \frac{n_e}n \,\inf g\,.
\]
Since this holds true for any $n>n_e$, letting $n\to \infty$ we obtain that $\liminf m_n\geq 0$, which is impossible since by Proposition~\ref{prop:limit-particles} we know that $\lim_{n\to\infty} m_n = \inf \{\E(\mu)\}<0$. The contradiction concludes the proof.
\end{proof}

The next lemma shows that if $g$ is continuous and decreasing in a neighborhood of $0$, then the potential corresponding to a minimizer of $\E$ is continuous in the convex hull of the support of the minimizer.

\begin{lemma}\label{lemma:potential-out-of-convex-spt}
Suppose that $g$ satisfies~\emph{\ref{ass:bddbelow_lsc}--\ref{ass:negativemeas}}, and let $\mu\in\P(\R^N)$ be any minimizer of $\E$. For every $\eps>0$ there exists $\rho>0$ such that, for any $\bar x\in\spt\mu\cap \partial(\convexEnv(\spt \mu))$ and any $x\in B_{\rho}(\bar x)\setminus \convexEnv(\spt \mu)$ so that $|x-\bar x| = d(x,\spt \mu)$, we have
\[
\psi_\mu(x)-\psi_\mu(\bar x)< \eps\,.
\]
\end{lemma}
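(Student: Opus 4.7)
My plan is to decompose
\[
\psi_\mu(x) - \psi_\mu(\bar x) = \int \bigl(g(y - x) - g(y - \bar x)\bigr)\,\dd\mu(y)
\]
and exploit the geometric inequality $|y - x| \geq \tfrac{1}{2}|y - \bar x|$ for every $y \in \spt\mu$. This follows by combining the nearest-point condition $|y - x| \geq |v| := |x - \bar x|$ with the convex-projection bound $|y - x| \geq |y - \bar{\bar x}|$, where $\bar{\bar x}$ is the projection of $x$ onto the closed convex set $\convexEnv(\spt\mu)$. The key auxiliary estimate $|\bar x - \bar{\bar x}| \leq |v|$ comes from the Pythagorean-type inequality $|\bar x - x|^2 \geq |\bar x - \bar{\bar x}|^2 + |x - \bar{\bar x}|^2$, valid since the angle at $\bar{\bar x}$ in the triangle $(\bar x, \bar{\bar x}, x)$ is at least $\pi/2$; this then yields $|y - x| \geq |y - \bar x| - |v|$, which together with $|y - x| \geq |v|$ gives the stated factor $1/2$ in all ranges of $|y - \bar x|$.

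I would first reduce to the singular case $g(0) = +\infty$. If $g(0) < +\infty$, then by \ref{ass:new} (radial decrease at the origin) together with the lower semicontinuity in \ref{ass:bddbelow_lsc}, $g$ is in fact continuous on all of $\R^N$, so $\psi_\mu = g \ast \mu$ is continuous and the statement is immediate. Otherwise $g(0) = +\infty$, and the finiteness of $\psi_\mu(\bar x) \leq \E(\mu)$ forces $\mu(\{\bar x\}) = 0$.

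With a small auxiliary parameter $\sigma > 0$ to be chosen, I split the integral into three regions according to $y \in \spt\mu \cap B_{|v|}(\bar x)$, $\spt\mu \cap (B_\sigma(\bar x) \setminus B_{|v|}(\bar x))$, and $\spt\mu \setminus B_\sigma(\bar x)$. On the innermost ball the nearest-point condition gives $|y - x| \geq |v| \geq |y - \bar x|$, so for $\rho \leq \bar r/4$ the radial decrease \ref{ass:new} makes the integrand non-positive. Outside $B_\sigma(\bar x)$, the geometric bound puts both $|y - x|$ and $|y - \bar x|$ into a compact annulus avoiding the origin (using compactness of $\spt\mu$ from Lemma~\ref{lemma:compact-support-min}), and uniform continuity of $g$ there yields a modulus $\omega_g(|v|) \to 0$. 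On the intermediate annulus, the geometric bound combined with radial decrease gives $g(y - x) \leq g(|y - \bar x|/2)$, so this contribution is dominated by the non-negative tail
\[
T(\bar x, \sigma) := \int_{B_\sigma(\bar x) \setminus \{\bar x\}} \bigl(g(|y - \bar x|/2) - g(y - \bar x)\bigr)\,\dd\mu(y),
\]
which tends to $0$ as $\sigma \downarrow 0$ by dominated convergence, using the $\mu$-integrability of $g(\cdot - \bar x)$ and $\mu(\{\bar x\}) = 0$.

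The main obstacle is ensuring this last decay is uniform in $\bar x$ ranging over the compact set $\spt\mu \cap \partial\convexEnv(\spt\mu)$. A Dini-type argument applied to the monotone family $\{T(\cdot, \sigma)\}_{\sigma > 0}$, together with the upper semicontinuity of $T(\cdot, \sigma)$ in $\bar x$ coming from the lower semicontinuity of $g$ and the non-atomicity of $\mu$, gives uniform convergence on this compact set. I would then choose $\sigma$ so that $T(\bar x, \sigma) < \eps/2$ uniformly in $\bar x$, and afterwards $\rho < \sigma/4$ small enough that $\omega_g(|v|) < \eps/2$ whenever $|v| < \rho$; combined with the non-positive close-region contribution, this yields $\psi_\mu(x) - \psi_\mu(\bar x) < \eps$.
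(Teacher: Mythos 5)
Your approach is genuinely different from the paper's. The paper establishes a \emph{pointwise} inequality $g(z-x)<g(z-\bar x)+\eps$ for every $z\in\spt\mu$ and then integrates against $\mu$; it splits into the case where both $|z-x|$ and $|z-\bar x|$ exceed $\bar r/2$ (uniform continuity of $g$ on the compact annulus $\overline{B}_R\setminus B_{\bar r/2}$) and the case where one of them is smaller (radial monotonicity together with the inequality $|z-\bar x|\leq|z-x|$). You instead work at the level of the integral with a three-region split. Your geometric bound $|y-x|\geq\tfrac12|y-\bar x|$ is correct and combines the nearest-point condition with the convex projection in a sensible way; it is essentially the best one can get from the literal hypothesis $|x-\bar x|=d(x,\spt\mu)$. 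The paper's stronger inequality $|z-\bar x|\leq|z-x|$ for all $z\in\convexEnv(\spt\mu)$ really uses that $\bar x$ is the projection of $x$ onto $\convexEnv(\spt\mu)$, which is the situation in the lemma's only application (in Theorem~\ref{thm:existence}, where $x=(1+t)\bar x$ with $\bar x$ a farthest point of $\spt\mu$ from the origin). Under that stronger inequality your intermediate annulus also carries a nonpositive integrand, so it disappears and the argument collapses to the paper's two cases.

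There are, however, two genuine gaps in your argument. First, the reduction to $g(0)=+\infty$ rests on the claim that $g(0)<+\infty$ together with lower semicontinuity and~\ref{ass:new} forces $g$ to be continuous on all of $\R^N$; this is false. Assumption~\ref{ass:new} only compares $g$ at two \emph{nonzero} points, while lower semicontinuity gives $g(0)\leq\liminf_{z\to0}g(z)$, so a downward jump $g(0)<\lim_{z\to0}g(z)\leq+\infty$ at the origin is perfectly compatible with the hypotheses, and then $\psi_\mu$ need not be continuous when $\mu$ has atoms. Second, and more seriously, the dominated convergence step for $T(\bar x,\sigma)\to0$ requires $g\bigl((\cdot-\bar x)/2\bigr)$ to be $\mu$-integrable near $\bar x$, and this does \emph{not} follow from the $\mu$-integrability of $g(\cdot-\bar x)$: radial monotonicity gives $g(w/2)\geq g(w)$ but no upper control of $g(w/2)$ in terms of $g(w)$, so the dominating function you invoke does not dominate the integrand, and $T(\bar x,\sigma)$ could a priori equal $+\infty$ for every $\sigma>0$. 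This integrability issue is exactly the price of the factor $\tfrac12$, and it is precisely what the paper avoids: with $|z-\bar x|\leq|z-x|$ one has $g(z-x)\leq g(z-\bar x)$ throughout $B_{\bar r}(\bar x)$, and no comparison with the dilated kernel $g\bigl((\cdot-\bar x)/2\bigr)$ is ever needed.
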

\begin{proof}
The approach here is very similar to the proof of~\cite[Lemma~3.7]{CP2025}. Let $S\defeq \spt(\mu)$, that is compact thanks to Lemma~\ref{lemma:compact-support-min}, let $R \defeq \diam (S)+\bar r$, where $\bar r$ is the constant in~\ref{ass:new}, and let $\omega$ be the modulus of continuity of $g$ in $\overline{B}_R\setminus B_{\bar r/2}$. We define $\rho<\bar r/2$ as any number such that $\omega(\rho)<\eps$, and we take two points $x$ and $\bar x$ as in the claim. Let us take any $z\in S$; we claim that
\begin{equation}\label{psicont}
g(z-x) <g(z-\bar x) +\eps\,.
\end{equation}
If both $|z-x|$ and $|z-\bar x|$ are greater than $\bar r/2$, then this is true because
\[
\big|g(z-x) - g(z-\bar x)\big|\leq \omega\big(|x-\bar x|\big)\leq \omega(\rho)<\eps\,.
\]
Instead, if one among $|z-x|$ and $|z-\bar x|$ is less than $\bar r/2$, then also the other one is less than $\bar r/2+|x-\bar x|\leq \bar r/2 + \rho<\bar r$, and since by definition $|z-\bar x|\leq |z-x|$ then $g(z-x)\leq g(z-\bar x)$. Thus, in both cases~(\ref{psicont}) is true. As a consequence, to get the thesis we just have to evaluate
\[
\psi_\mu(x)-\psi_\mu(\bar x)=\int g(z-x) - g(z-\bar x) \dd\mu(z)< \eps \mu(\R^N) = \eps\,.
\]
\end{proof}

We are finally ready to show the existence of minimizers for the discretized energies $\E_n$ whenever the discretization parameter $n$ is large enough, using the result of Lemma~\ref{lemma:potential-out-of-convex-spt}. By Proposition~\ref{prop:limit-particles}, we have that a sequence of asymptotically optimal discrete energies is precompact (up to translations), but this does not guarantee that there exists a minimizing $n$-discrete probability measure for every $n$ large enough. In fact, for every fixed $n\in\N$ it could be convenient to consider a discrete measure with a small fraction of the particles that go to infinity. We will ultimately show that this is not possible because the interaction energy at infinite distance is (at least) zero, while the average interaction energy of a candidate minimizer is negative thanks to~\ref{ass:negativemeas}. Thus it is convenient to ``bring back'' the particles which are far away, to be closer to the large portion of the cluster that is converging the in weak-$*$ topology.

Before presenting our last result, let us introduce a little notation. For any $n\in\N$, consider two measures $\nu_1,\, \nu_2$ of the form
\begin{align*}
\nu_1 = \frac 1n \sum_{i=1}^{n_1} \delta_{P_i}\,, &&
\nu_2 = \frac 1n \sum_{j=1}^{n_2} \delta_{Q_j}\,,
\end{align*}
where $n_1,\, n_2\leq n$ and $P_i$ and $Q_j$ are some points in $\R^N$. In particular, $\nu_1$ and $\nu_2$ are $n$-discrete measures only if $n_1$ and $n_2$ coincide with $n$. With a small abuse of notation, we will write
\begin{equation}\label{notation1}
\E_n(\nu_1,\nu_2) = \frac 1{n^2}\sum_{i=1}^{n_1} \sum_{j=1}^{n_2} g(P_i-Q_j)\,,
\end{equation}
and we will also write
\begin{equation}\label{notation2}
\E_n(\nu_1) = \frac 1{n^2} \sum_{1\leq i\neq j\leq n_1} g(P_i-P_j) = \frac{n_1^2}{n^2}\,\E_{n_1}(\nu_1)\,.
\end{equation}
Notice that $\E_n(\nu_1)$ does not coincide with $\E_n(\nu_1,\nu_1)$, because we do not consider self-interaction of a point with itself; in particular, if $g(0)=+\infty$ (as it happens in many of the main examples), then $\E_n(\nu_1,\nu_1)$ is always $+\infty$. This is different from the continuous interaction functional, for which we use the notation $\E(\mu)=\E(\mu,\mu)$. This notation introduced in \eqref{notation1} and \eqref{notation2} will be useful in our next construction.

\begin{theorem}\label{thm:existence}
Suppose that $g$ satisfies~\emph{\ref{ass:bddbelow_lsc}--\ref{ass:negativemeas}}. Then there exists $\bar n\in\N$ such that, for every $n>\bar n$, the functional $\E_n$ admits a minimizer.
\end{theorem}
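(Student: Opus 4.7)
The plan is to argue by contradiction: suppose that for arbitrarily large $n$ the functional $\E_n$ admits no minimizer. Fix such an $n$ and a minimizing sequence $\pi^h=\frac{1}{n}\sum_{i=1}^n\delta_{x^h_i}$ with $\E_n(\pi^h)\to m_n$; after extracting subsequences so that every pairwise distance $|x^h_i-x^h_j|$ converges in $[0,+\infty]$, I partition $\{1,\dots,n\}$ into \emph{classes} $H_1,\dots,H_M$ (same class iff the limiting distance is finite), exactly as in the proof of Proposition~\ref{prop:frequent-existence}. Each class converges up to a translation to an $n_m$-discrete measure $\tau_m$ with $n_m=|H_m|$; by~\emph{\ref{ass:zerolimit}} cross-class interactions contribute asymptotically at least $0$, and by the swap argument of Proposition~\ref{prop:frequent-existence} each $\tau_m$ must realize $m_{n_m}$, yielding the identity $m_n=\sum_{m=1}^M (n_m/n)^2\, m_{n_m}$. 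The non-existence assumption forces $M\geq 2$.

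Combining this identity with Proposition~\ref{prop:limit-particles} (which gives $m_n\to\inf\E<0$), I then show that for $n$ large a single class must asymptotically dominate. Classes of bounded size contribute $o(1)$ to the sum (since $(n_m/n)^2\to 0$ while $m_{n_m}$ stays a fixed finite number), while classes with $n_m\to\infty$ and $n_m/n\to\lambda_m$ contribute $\lambda_m^2\inf\E$ (using $m_{n_m}\to\inf\E$). The resulting limit $\bigl(\sum\lambda_m^2\bigr)\inf\E$ equals $\inf\E$ only if a single $\lambda_m$ is $1$; otherwise $\sum\lambda_m^2<1$ contradicts $m_n\to\inf\E<0$. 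Hence the largest class $\tau_1$ satisfies $n_1/n\to 1$, the non-main classes collectively carry only $k_n:=n-n_1=o(n)$ particles, and Proposition~\ref{prop:limit-particles} applied to the sequence $\tau_1$ (each a minimizer of $\E_{n_1}$ with $n_1\to\infty$) gives $\tau_1\weakstar\mu$ up to translation for some continuum minimizer $\mu$ of $\E$, whose support is compact by Lemma~\ref{lemma:compact-support-min}.

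The contradiction now comes from the ``bring-back'' construction. Using Lemma~\ref{lemma:potential-out-of-convex-spt} applied to $\mu$ with $\eps=|\E(\mu)|/4$, I obtain $\rho>0$ and a point $P$ just outside $\convexEnv(\spt\mu)$, near some extremal support point $\bar x\in\spt\mu\cap\partial(\convexEnv(\spt\mu))$, with $\psi_\mu(P)<\E(\mu)/2<0$. By~\emph{\ref{ass:new}} the function $g(P-\cdot)$ is continuous away from $P$, so $\tau_1\weakstar\mu$ yields $\psi_{\tau_1}(P)<\E(\mu)/3<0$ for $n$ large. I then modify $\pi^h$ by choosing one point $y^h$ from a non-main class and translating it to $P$, leaving the remaining classes in place (and well-separated). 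Using the notation of~\eqref{notation1}--\eqref{notation2}, the limiting change of energy is dominated by the new $\tau_1$--$\{P\}$ interaction
\[
\frac{2n_1}{n^2}\,\psi_{\tau_1}(P) \;\leq\; \frac{2n_1}{n^2}\cdot\frac{\E(\mu)}{3} \;<\; 0\,,
\]
while the corrections from severing $y^h$ from its own cluster and from the residual cross-interactions between distant clusters, controlled by~\emph{\ref{ass:zerolimit}}, are of lower order. This gives $\E_n(\tilde\pi^h)<m_n$ for $h$ large, the sought contradiction.

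The main technical obstacle lies in making those correction terms genuinely of lower order under the rather weak~\emph{\ref{ass:zerolimit}}, which only provides $\liminf_{|x|\to\infty}g(x)\geq 0$ and permits $g$ to grow at infinity: severing $y^h$ pushes finitely many pair distances to infinity where $g$ could be large and positive, while the residual far-cluster cross-interactions may themselves contribute strictly positive amounts. A careful selection of $y^h$ (for instance, as an extremal point of its own minimizer class $\tau_m$, exploiting that $\tau_m$ realizes $m_{n_m}$) together with a suitable rearrangement of the remaining clusters keeps these losses of order $O(k_n/n^2)$, which is dominated by the $O(1/n)$ gain since $k_n/n\to 0$. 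This bookkeeping, rather than any new conceptual ingredient, is what makes the plan work uniformly for all $n>\bar n$.
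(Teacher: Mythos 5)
Your plan shares the paper's high-level idea (near-optimal discrete configurations converge to a continuum minimizer $\mu$ with compact support; the escaping particles are ``brought back'' to a cheap location near $\spt\mu$, contradicting near-optimality), but the implementation is genuinely different. The paper does \emph{not} perform a cluster decomposition: for each bad $n_k$ it picks a single near-optimal configuration $\pi_k$, applies Proposition~\ref{prop:limit-particles} to the whole sequence $\{\pi_k\}$, splits $\pi_k=\pi'_k+\pi''_k$ by whether the particles lie in a fixed ball $B_{R+R'}$, and replaces \emph{all} $a_k$ outside particles simultaneously by a grid of $a_k$ points in a small cube $Q$ just outside $\convexEnv(\spt\mu)$. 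You instead decompose into classes $H_1,\dots,H_M$ as in Proposition~\ref{prop:frequent-existence} and move a single stray point $y^h$ to a point $P$.

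The ``all at once'' feature is not cosmetic, and your version has a gap right where you flag the ``main technical obstacle''. After translating $y^h$ to $P$, the point at $P$ acquires new interactions with the remaining far clusters (including the rest of $y^h$'s own class), at distances tending to infinity with $h$. Assumption~\ref{ass:zerolimit} controls only $\liminf_{|x|\to\infty}g(x)\geq 0$; it does \emph{not} bound $g$ from above far away, so these new terms can be arbitrarily large (even $+\infty$ along subsequences of directions). Your proposed fixes do not address this: choosing $y^h$ as an extremal point of $\tau_m$ and rearranging the remaining clusters changes neither the fact that they stay far from $P$ nor the possible size of $g$ at those distances. The paper's construction avoids the problem entirely, since in $\tilde\pi_k=\pi'_k+\tilde\pi''_k$ every particle lies in the bounded set $B_{R+R'}\cup Q$, so no large-distance interaction ever appears.

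Two secondary issues. First, the step $\psi_{\tau_1}(P)<\E(\mu)/3$ for $n$ large tacitly requires pointwise convergence $\psi_{\tau_1}(P)\to\psi_\mu(P)$, but weak-$*$ convergence does not give this at a fixed $P$ when $g$ is singular at the origin: $g(P-\cdot)\notin C_0(\R^N)$, and $\tau_1$ is not constrained away from $P$. The paper proves only $\psi_{\pi'_k}\weakstar\psi_\mu$ in $L^1_{\rm loc}$ and then \emph{averages} over a grid in $Q$, which is exactly the kind of statement that is controlled by weak-$*$ convergence. Second, the ``swap argument'' you import from Proposition~\ref{prop:frequent-existence} (to get $\tau_m$ realizing $m_{n_m}$ and the identity $m_n=\sum(n_m/n)^2m_{n_m}$) uses $\eta^h=\sup\{g(v):|v|\geq d^h\}\to 0$, i.e.\ it needs $\lim_{|x|\to\infty}g(x)=0$, which is strictly stronger than~\ref{ass:zerolimit}. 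That particular step is, however, salvageable: the one-sided bound $m_n\geq\sum(n_m/n)^2m_{n_m}$ (which follows from lower semicontinuity of $\E_{n_m}$ and $\liminf g\geq 0$ alone) already yields $n_1/n\to 1$ and $\E_{n_1}(\tau_1)\to\inf\E$, hence applicability of Proposition~\ref{prop:limit-particles} to $\{\tau_1\}$. The real obstruction is the first one.
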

\begin{proof}
Let us assume that the claim is false. Then, there exists a sequence $n_k\nearrow +\infty$ such that every functional $\E_{n_k}$ does not admit a minimizer. For every $k$, we can select a $n_k$-discrete measure $\pi_k$, concentrated on the points $x^k_1,\, x^k_2,\, \dots\,,\, x^k_{n_k}$, such that
\begin{equation}\label{almostminimizer}
\E_{n_k} ( \pi_k) < m_{n_k} + \frac 1{k n_k}\,.
\end{equation}
Since there is no minimizer for $\E_{n_k}$, we can assume that
\begin{equation}\label{spread}
\max \Big\{ \big|x^k_i-x^k_j\big| \colon 1\leq i,\,j\leq n_k\Big\}>k\,.
\end{equation}

By Proposition~\ref{prop:limit-particles}, up to a subsequence and up to translations, the sequence $\pi_k$ weakly* converges to an optimal measure $\mu$, which by Lemma~\ref{lemma:compact-support-min} has compact support, so $\spt\mu\comp B_R$ for some radius $R$. By~\ref{ass:negativemeas}, we have $\alpha\defeq\E(\mu)<0$, so by~\ref{ass:zerolimit} there is some $R'$ such that
\begin{equation}\label{R'islarge}
g(v)\geq \frac \alpha 4\qquad \forall\, v\in\R^N,\, |v|\geq R'\,.
\end{equation}
Up to renumbering the points, for every $k$ there is a number $0\leq a_k\leq n_k$ such that
\[
|x^k_i| \leq R+R' \quad \Longleftrightarrow \quad 1\leq i \leq n_k-a_k\,.
\]
In words, the first $n_k-a_k$ points are in the large ball $B_{R+R'}$, and the last $a_k$ are outside of this ball. By~(\ref{spread}), we have $a_k\geq 1$ for every $k>2(R+R')$; that is, for every large $k$ there are some points in the support of $\pi_k$ which are very far away. However, since $\pi_k\weakstar \mu$, the mass of $\pi_k$ inside the ball $B_{R+R'}$ converges to $1$, so
\begin{equation}\label{aknkto0}
\lim_{k\to\infty} \frac{a_k}{n_k}=0\,.
\end{equation}
Let us now write for brevity $\pi_k=\pi'_k +\pi''_k$, where
\begin{align*}
\pi'_k = \frac 1{n_k} \sum_{i=1}^{n_k-a_k} \delta_{x^k_i}\,, &&
\pi''_k = \frac 1{n_k} \sum_{i=n_k-a_k+1}^{n_k} \delta_{x^k_i}\,.
\end{align*}
Basically, the measure $\pi'_k$ represents the part of $\pi_k$ which is concentrated in the ball $B_{R+R'}$. Thanks to~(\ref{aknkto0}), we also have that $\pi'_k\weakstar \mu$. Using the notation~(\ref{notation1})--(\ref{notation2}), we have
\begin{equation}\label{exactenergy}
\E_{n_k}(\pi_k) = \E_{n_k}(\pi'_k)+\E_{n_k}(\pi_k'') + 2 \E_{n_k}(\pi_k',\pi_k'')\,.
\end{equation}
Notice now that
\[
\E_{n_k}(\pi_k'') = \frac 1{n_k^2} \sum_{1\leq i\neq j\leq a_k} g(x^k_{n_k-a_k+i}-x^k_{n_k-a_k+j}) \geq \inf g\,  \frac{a_k^2}{n_k^2}\,.
\]
In order to evaluate $\E_{n_k}(\pi_k',\pi_k'')$, we call $a_k'$ the number of points in the support of $\pi_k'$ which are in the annulus $B_{R+R'}\setminus B_R$. Then, also taking into account~(\ref{R'islarge}), we have
\[\begin{split}
\E_{n_k}(\pi_k',\pi_k'') &= \frac 1{n_k^2} \sum_{i=1}^{n_k-a_k} \sum_{j=n_k-a_k+1}^{n_k} g(x^k_i-x^k_j)
\geq \frac{a_k(n_k-a_k-a_k')}{n_k^2} \cdot\frac \alpha 4 +\frac{a_k a_k'}{n_k^2} \,\inf g\\
&\geq \frac{a_k}{n_k} \cdot \frac \alpha 4 +\frac{a_k a_k'}{n_k^2} \,\inf g\,.
\end{split}\]
Again keeping in mind that $\pi_k\weakstar\mu$, together with~(\ref{aknkto0}) we know that also $a_k'/n_k\to 0$. As a consequence, inserting the last two estimates in~(\ref{exactenergy}) we obtain
\begin{equation}\label{originalcase}
\liminf_{k\to\infty}\, \Big(\E_{n_k}(\pi_k) - \E_{n_k}(\pi'_k)\Big) \cdot \frac{n_k}{a_k} \geq \frac \alpha 2 \,.
\end{equation}
This estimate says that the effect that the last $a_k$ points have to the energy of $\pi_k$ is extremely scarce (or even positive), also with respect to their total mass $a_k/n_k$. We will find a contradiction by showing that we can replace these points with other $a_k$ points, much closer to the origin, having a greater effect on the energy. Since by~(\ref{almostminimizer}) the measures $\pi_k$ are almost optimal, this will provide the desired contradiction.\par

In order to do this, we start by working on the potentials $\psi_{\pi_k'}$ and $\psi_\mu$ corresponding to the measures $\pi_k'$ and $\mu$. Notice that these potentials belong to $L^1_{\rm loc}$ by definition. We claim that
\begin{equation}\label{weaklimit}
\psi_{\pi_k'} \weakstar \psi_\mu\,.
\end{equation}
To show this property, we fix any $\phi \in C_c(\R^N)$, and we call $K=\spt\phi$. We have then
\[
\int \phi(x) \psi_{\pi_k'}(x)\dd x = \int \phi(x) \bigg( \int g(x-y)\dd\pi_k'(y) \bigg)\dd x
= \int  \bigg( \int \phi(x) g(x-y)\dd x  \bigg)\dd\pi_k'(y)\,.
\]
Now, notice that $\phi\ast g$ is a continuous and bounded function in $K+B_{R+R'}$, because $g$ is an $L^1_{\rm loc}$ function and $\phi\in C_c(\R^N)$. As a consequence, since we have already observed that $\pi_k'\weakstar \mu$, we can pass to the limit obtaining then
\[
\lim_{k\to\infty} \int \phi(x) \psi_{\pi_k'}(x)\dd x = \int  \bigg( \int \phi(x) g(x-y)  \dd x  \bigg)\dd\mu(y) = \int \phi(x) \psi_\mu(x)\dd x \,.
\]
We then obtain~(\ref{weaklimit}). Notice that it has been fundamental to consider the sequence of measures $\pi_k'$, which are all concentrated in the ball $B_{R+R'}$. The same argument could have not been done with the original sequence $\pi_k$, because $\phi\ast g$ ist not necessarily continuous and bounded in the whole $\R^N$, and then it is not possible to prove that $\psi_{\pi_k}$ converges to $\psi_\mu$.

Let us now take an extremal point $\bar x$ of $\convexEnv(\spt\mu)$, for instance let $\bar x$ be a point of $\spt\mu$ maximizing the distance from the origin. Since we know that $\psi_\mu(\bar x)\leq\alpha$ by Lemma~\ref{lemma:EL-E}, by Lemma~\ref{lemma:potential-out-of-convex-spt} we deduce that $\psi_\mu\big((1+t)\bar x\big)\leq \frac 34\, \alpha$ for some small $t>0$. Since the potential $\psi_\mu$ is continuous outside of $\spt\mu$, and the point $(1+t)\bar x$ is by construction outside $\spt\mu$, there is a small cube $Q$ of side $\eta$ centered at that point such that $\psi_\mu(x)\leq \frac 23\,\alpha$ for every $x\in Q$. We can assume that $\diam Q<\bar r$, where $\bar r$ is the constant in~\ref{ass:new}.

Let us then consider a given $k$. We define $l=\lceil\sqrt[N]{a_k}\ \rceil$, and we subdivide the cube $Q$ in $l^N$ cubes of side $\eta/l$ each. If $k$ is large enough, we can take a regular grid of points $y^k_j$, with $1\leq j \leq l^N$, each one being in one of the $l^N$ cubes, in such a way that
\[
\frac 1{l^N} \sum_{j=1}^{l^N} \psi_{\pi_k'}(y^k_j) \leq \frac \alpha 2\,.
\]
Notice that this is possible by the fact that $\psi_\mu\leq \frac 23\,\alpha$ in the whole cube $Q$, and since by~(\ref{weaklimit}) we know that $\psi_{\pi_k'}\weakstar \psi_\mu$. 
Up to renumbering the points, and keeping in mind that $l^N\geq a_k$, we can also assume that
\begin{equation}\label{goodchoice}
\frac 1{a_k} \sum_{j=1}^{a_k} \psi_{\pi_k'}(y^k_j) \leq \frac \alpha 2\,.
\end{equation}
We can then define a competitor to $\pi_k$ as $\tilde\pi_k=\pi_k'+\tilde\pi_k''$, where
\[
\tilde\pi_k''=\frac 1{n_k}\sum_{i=1}^{a_k} \delta_{y^k_i}\,.
\]
As in~(\ref{exactenergy}), we have
\begin{equation}\label{exactenergytilde}
\E_{n_k}(\tilde\pi_k) = \E_{n_k}(\pi'_k)+\E_{n_k}(\tilde\pi_k'') + 2 \E_{n_k}(\pi_k',\tilde\pi_k'')\,.
\end{equation}
By definition, and keeping in mind~(\ref{goodchoice}), we can evaluate
\begin{equation}\label{bra1}
\E_{n_k}(\pi_k',\tilde\pi_k'')= \frac 1{n_k^2}\, \sum_{i=1}^{n_k-a_k} \sum_{j=1}^{a_k} g(x^k_i-y^k_j)
=\frac 1{n_k} \,\sum_{j=1}^{a_k} \psi_{\pi_k'}(y^k_j)
\leq\frac {a_k}{n_k} \, \frac\alpha 2\,.
\end{equation}
In order to estimate $\E_{n_k}(\tilde\pi_k'')$, let us fix a generic $1\leq j\leq a_k$. We use the following elementary fact: there exists a dimensional constant $C_N>0$ such that, for any $\rho>0$ the number of points in $\{w\in\Z^N: 0<|z|<\rho\}$ is less than $C_N\rho^N$. Then, keeping in mind that the points $y^k_i$ with $1\leq i \leq l^N$ are on a grid with side-length $\eta/l$, using~\ref{ass:new} and the fact that $\diam Q\leq \bar r$, we deduce that
\[
\sum_{1\leq i \leq a_k,\, i\neq j} g(y^k_i-y^k_j) \leq C_N l^N \intmed_{[-\eta/2,\eta/2]^N} g(z)\dd z\,.
\]
Since we know that $a_k\geq 1$, we have $l^N\leq 2^{N-1} a_k$, so adding the above inequality for all $1\leq j\leq a_k$ we obtain
\begin{equation}\label{bra2}
\E_{n_k}(\tilde\pi_k'') = \frac 1{n_k^2} \sum_{j=1}^{a_k} \sum_{1\leq i \leq a_k,\, i\neq j} g(y^k_i-y^k_j)
\leq \frac{2^{N-1} C_N  a_k^2}{n_k^2} \, \intmed_{[-\eta/2,\eta/2]^N} g(z)\dd z\,.
\end{equation}
Inserting~(\ref{bra1}) and~(\ref{bra2}) into~(\ref{exactenergytilde}), and keeping in mind that $\eta$ is small but fixed and $g\in L^1_{\rm loc}$ while $a_k/n_k\to 0$ by~(\ref{aknkto0}), we get
\[
\limsup_{k\to\infty}\, \Big(\E_{n_k}(\tilde\pi_k) - \E_{n_k}(\pi'_k)\Big) \cdot \frac{n_k}{a_k} \leq \alpha \,.
\]
Coupling this estimate with~(\ref{originalcase}), also keeping in mind that $a_k\geq 1$, ensures that for $k$ large
\[
\E_{n_k}(\tilde\pi_k)\leq \E_{n_k}(\pi_k)+\frac 13\, \alpha\, \frac{a_k}{n_k}\leq \E_{n_k}(\pi_k) + \frac \alpha{3n_k}\,.
\]
However, since $\tilde\pi_k$ is a $n_k$-discrete measure and by using~(\ref{almostminimizer}) we obtain that for every $k$ large
\[
m_{n_k} \leq \E_{n_k}(\tilde\pi_k)\leq \E_{n_k}(\pi_k) - \frac \alpha{3n_k} \leq m_{n_k} + \frac 1{k n_k} +\frac \alpha{3n_k}\,,
\]
which is impossible for $k>(-\alpha)^{-1}$. The contradiction shows the thesis.
\end{proof}

\begin{remark}\label{rem:trivial-existence}
It is easy to prove the existence of minimizers for $\E_n$ for every $n\in\N$ when $g$ approaches zero from below at infinity. In fact, in this case it is not convenient to send some mass at infinity since at some finite (but maybe large) distance the interaction is strictly negative (see~\cite[Section 1.2]{BlaLew15}). In a similar way, it is easy to prove the existence of minimizers of $\E_n$ for every $n\in\N$ when $g\to+\infty$ at infinity (see~\cite[Theorem 4.1]{CanPat2018}).
\end{remark}


\bigskip
\subsection*{Acknowledgments}
This research was funded in part by the Austrian Science Fund (FWF) [grant DOI \href{https://www.fwf.ac.at/en/research-radar/10.55776/EFP6}{10.55776/EFP6}]. IT was partially supported by the Simons Foundation (851065), by the NSF-DMS (2306962), and by the Alexander von Humboldt Foundation. For open access purposes, the authors have applied a CC BY public copyright license to any author-accepted manuscript version arising from this submission.

\bibliographystyle{amsalpha}
\bibliography{references}

\end{document}